
\documentclass[a4paper,11pt,titlepage,english]{smfart}
\usepackage{axodraw}
\usepackage{amsfonts}
\usepackage{amsmath}
\usepackage{amssymb}
\usepackage{color}
\usepackage{graphicx}
\usepackage{xspace}
\input xy
\xyoption{all}
 \font \eightrm=cmr8

 \newcommand{\nc}{\newcommand}

 \setlength{\textheight}{9.9in}
 \setlength{\topmargin}{-35pt}
 \setlength{\textwidth}{6.7in}
 \setlength{\oddsidemargin}{-8pt}
 \setlength{\evensidemargin}{-8pt}


\hfuzz5pt \vfuzz5pt

\newtheorem{thm}{Theorem}
\newtheorem{exam}{Example}

\newtheorem{lem}[thm]{Lemma}
\newtheorem{prop}[thm]{Proposition}
\newtheorem{defn}{Definition}
\newtheorem{rmk}[thm]{Remark}

\def\diagramme #1{\vskip 4mm \centerline {#1} \vskip 4mm}

\nc{\BA}{{\Bbb A}} \nc{\CC}{{\Bbb C}} \nc{\DD}{{\Bbb D}}
\nc{\EE}{{\Bbb E}} \nc{\FF}{{\Bbb F}} \nc{\GG}{{\Bbb G}}
\nc{\HH}{{\Bbb H}} \nc{\LL}{{\Bbb L}} \nc{\NN}{{\Bbb N}}
\nc{\PP}{{\Bbb P}} \nc{\QQ}{{\Bbb Q}} \nc{\RR}{{\Bbb R}}
\nc{\TT}{{\Bbb T}} \nc{\VV}{{\Bbb V}} \nc{\ZZ}{{\Bbb Z}}
\nc{\Cal}[1]{{\mathcal {#1}}}
\nc{\mop}[1]{\mathop{\hbox {\rm #1} }}
\nc{\smop}[1]{\mathop{\hbox {\eightrm #1} }}
\nc{\mopl}[1]{\mathop{\hbox {\rm #1} }\limits}
\nc{\frakg}{{\frak g}}
\nc{\g}[1]{{\frak {#1}}}
\def \restr#1{\mathstrut_{\textstyle |}\raise-8pt\hbox{$\scriptstyle #1$}}
\def \srestr#1{\mathstrut_{\scriptstyle |}\hbox to
  -1.5pt{}\raise-4pt\hbox{$\scriptscriptstyle #1$}}
\nc{\wt}{\widetilde}
\nc{\wh}{\widehat}
\nc{\un}{\hbox{\bf 1}}
\nc{\redtext}[1]{\textcolor{red}{\tt #1}}
\nc{\bluetext}[1]{\textcolor{blue}{#1}}
\nc{\comment}[1]{[[{\tt {#1}}]] }
\nc{\R}{{\mathbb R}}

\nc\fleche[1]{\mathop{\hbox to #1 mm{\rightarrowfill}}\limits}

\def\semi{\mathrel{\raise 1.2pt\hbox{${\scriptscriptstyle |}$}\joinrel\kern -3.8pt\mathrel{\times}}}

\def\racine{\,{\scalebox{0.07}{
\begin{picture}(29,29) (360,-285)
    \SetWidth{6}
    \SetColor{Black}
    \Vertex(375,-271){20}
  \end{picture}
  }}\,}
  \def\racineun{\,{\scalebox{0.5}{
  \begin{picture}(43,27) (342,-266)
    \SetWidth{1.0}
    \SetColor{Black}
    \GOval(352,-256)(9,9)(0){0.882}
    \Text(350,-260)[lb]{\Large{\Black{$1$}}}
  \end{picture}
  }}\,}
  \def\racinedeux{\,{\scalebox{0.5}{
  \begin{picture}(43,27) (342,-266)
    \SetWidth{1.0}
    \SetColor{Black}
    \GOval(352,-256)(9,9)(0){0.882}
    \Text(350,-260)[lb]{\Large{\Black{$2$}}}
  \end{picture}
  }}\,}
  \def\racinetrois{\,{\scalebox{0.5}{
  \begin{picture}(43,27) (342,-266)
    \SetWidth{1.0}
    \SetColor{Black}
    \GOval(352,-256)(9,9)(0){0.882}
    \Text(350,-260)[lb]{\Large{\Black{$3$}}}
  \end{picture}
  }}\,}
  \def\racinen{\,{\scalebox{0.5}{
  \begin{picture}(43,27) (342,-266)
    \SetWidth{1.0}
    \SetColor{Black}
    \GOval(352,-256)(9,9)(0){0.882}
    \Text(350,-260)[lb]{\Large{\Black{$n$}}}
  \end{picture}
  }}\,}
  \def\arbaf{\,{\scalebox{0.5}{
 \begin{picture}(106,125) (318,-165)
    \SetWidth{1.0}
    \SetColor{Black}
    \GOval(354,-155)(9,9)(0){0.882}
    \GOval(353,-117)(9,9)(0){0.882}
    \Line(353,-126)(353,-145)
    \GOval(351,-85)(9,9)(0){0.882}
    \Line(352,-93)(352,-109)
    \GOval(375,-62)(9,9)(0){0.882}
    \GOval(328,-61)(9,9)(0){0.882}
    \Line(333,-69)(344,-78)
    \Line(368,-68)(355,-80)
    \Text(369,-161)[lb]{\Large{\Black{$(a,1)$}}}
    \Text(368,-122)[lb]{\Large{\Black{$(e,1)$}}}
    \Text(365,-89)[lb]{\Large{\Black{$(h,2)$}}}
    \Text(389,-63)[lb]{\Large{\Black{$(d,1)$}}}
    \Text(336,-61)[lb]{\Large{\Black{$(c,2)$}}}
  \end{picture}
  }}\,}

 \def\arbae{\,{\scalebox{0.5}{
 \begin{picture}(89,130) (340,-161)
    \SetWidth{1.0}
    \SetColor{Black}
    \GOval(352,-151)(9,9)(0){0.882}
    \GOval(351,-114)(9,9)(0){0.882}
    \Line(352,-123)(352,-142)
    \GOval(350,-79)(9,9)(0){0.882}
    \GOval(350,-46)(9,9)(0){0.882}
    \GOval(381,-100)(9,9)(0){0.882}
    \Line(351,-88)(352,-103)
    \Line(373,-105)(359,-110)
    \Line(351,-54)(351,-70)
    \Text(365,-158)[lb]{\Large{\Black{$(a,1)$}}}
    \Text(362,-128)[lb]{\Large{\Black{$(e,1)$}}}
    \Text(394,-105)[lb]{\Large{\Black{$(d,1)$}}}
    \Text(361,-85)[lb]{\Large{\Black{$(h,2)$}}}
    \Text(363,-52)[lb]{\Large{\Black{$(c,2)$}}}
  \end{picture}
  }}\,} 
 \def\arbad{\,{\scalebox{0.5}{
  \begin{picture}(89,130) (340,-161)
    \SetWidth{1.0}
    \SetColor{Black}
    \GOval(352,-151)(9,9)(0){0.882}
    \GOval(351,-114)(9,9)(0){0.882}
    \Line(352,-123)(352,-142)
    \GOval(350,-79)(9,9)(0){0.882}
    \GOval(350,-46)(9,9)(0){0.882}
    \GOval(381,-100)(9,9)(0){0.882}
    \Line(351,-88)(352,-103)
    \Line(373,-105)(359,-110)
    \Line(351,-54)(351,-70)
    \Text(365,-158)[lb]{\Large{\Black{$(a,1)$}}}
    \Text(362,-128)[lb]{\Large{\Black{$(e,1)$}}}
    \Text(394,-105)[lb]{\Large{\Black{$(c,2)$}}}
    \Text(361,-85)[lb]{\Large{\Black{$(h,2)$}}}
    \Text(363,-52)[lb]{\Large{\Black{$(d,1)$}}}
  \end{picture}
    }}\,} 
 \def\arbac{\,{\scalebox{0.5}{
 \begin{picture}(142,104) (294,-187)
    \SetWidth{1.0}
    \SetColor{Black}
    \GOval(352,-177)(9,9)(0){0.882}
    \GOval(351,-140)(9,9)(0){0.882}
    \Line(352,-149)(352,-168)
    \GOval(349,-98)(9,9)(0){0.882}
    \GOval(304,-121)(9,9)(0){0.882}
    \GOval(389,-116)(9,9)(0){0.882}
    \Line(313,-124)(342,-136)
    \Line(349,-106)(350,-132)
    \Line(382,-122)(357,-135)
    \Text(367,-182)[lb]{\Large{\Black{$(a,1)$}}}
    \Text(364,-147)[lb]{\Large{\Black{$(e,1)$}}}
    \Text(401,-124)[lb]{\Large{\Black{$(d,1)$}}}
    \Text(360,-104)[lb]{\Large{\Black{$(h,2)$}}}
    \Text(312,-122)[lb]{\Large{\Black{$(c,2)$}}}
  \end{picture}
  }}\,} 
 \def\arbab{\,{\scalebox{0.5}{
 \begin{picture}(59,57) (341,-234)
    \SetWidth{1.0}
    \SetColor{Black}
    \GOval(352,-224)(9,9)(0){0.882}
    \GOval(351,-187)(9,9)(0){0.882}
    \Line(352,-196)(352,-215)
    \Text(365,-231)[lb]{\Large{\Black{$(e,1)$}}}
    \Text(362,-200)[lb]{\Large{\Black{$(h,2)$}}}
  \end{picture}
 }}\,}
 \def\arbxl{\,{\scalebox{0.5}{
 \begin{picture}(59,57) (341,-234)
    \SetWidth{1.0}
    \SetColor{Black}
    \GOval(352,-224)(9,9)(0){0.882}
    \Text(365,-231)[lb]{\Large{\Black{$(x,l)$}}}
  \end{picture}
 }}\,}
 \def\arbabxy{\,{\scalebox{0.5}{
 \begin{picture}(59,57) (341,-234)
    \SetWidth{1.0}
    \SetColor{Black}
    \GOval(352,-224)(9,9)(0){0.882}
    \GOval(351,-187)(9,9)(0){0.882}
    \Line(352,-196)(352,-215)
    \Text(365,-231)[lb]{\Large{\Black{$(x,k)$}}}
    \Text(362,-200)[lb]{\Large{\Black{$(y,l)$}}}
  \end{picture}
 }}\,}
 \def\arbabyx{\,{\scalebox{0.5}{
 \begin{picture}(59,57) (341,-234)
    \SetWidth{1.0}
    \SetColor{Black}
    \GOval(352,-224)(9,9)(0){0.882}
    \GOval(351,-187)(9,9)(0){0.882}
    \Line(352,-196)(352,-215)
    \Text(365,-231)[lb]{\Large{\Black{$(y,l)$}}}
    \Text(362,-200)[lb]{\Large{\Black{$(x,k)$}}}
  \end{picture}
 }}\,}
 \def\arbabst{\,{\scalebox{0.5}{
 \begin{picture}(59,57) (341,-234)
    \SetWidth{1.0}
    \SetColor{Black}
    \GOval(352,-224)(9,9)(0){0.882}
    \GOval(351,-187)(9,9)(0){0.882}
    \Line(352,-196)(352,-215)
    \Text(365,-231)[lb]{\Large{\Black{$(v,\left|T\right|)$}}}
    \Text(362,-200)[lb]{\Large{\Black{$(w,\left|S\right|)$}}}
  \end{picture}
 }}\,} 
 \def\arbaa{\,{\scalebox{0.5}{
 \begin{picture}(106,88) (314,-203)
    \SetWidth{1.0}
    \SetColor{Black}
    \GOval(352,-193)(9,9)(0){0.882}
    \GOval(351,-156)(9,9)(0){0.882}
    \Line(352,-165)(352,-184)
    \GOval(324,-134)(9,9)(0){0.882}
    \GOval(375,-132)(9,9)(0){0.882}
    \Line(357,-149)(369,-138)
    \Line(333,-141)(345,-150)
    \Text(365,-199)[lb]{\Large{\Black{$(a,1)$}}}
    \Text(363,-164)[lb]{\Large{\Black{$(b,3)$}}}
    \Text(385,-141)[lb]{\Large{\Black{$(d,1)$}}}
    \Text(334,-136)[lb]{\Large{\Black{$(c,2)$}}}
  \end{picture}
    }}\,} 
\def\echelunun{\,{\scalebox{0.5}{
\begin{picture}(45,63) (341,-230)
    \SetWidth{1.0}
    \SetColor{Black}
    \GOval(352,-220)(9,9)(0){0.882}
    \GOval(351,-183)(9,9)(0){0.882}
    \Line(352,-191)(352,-210)
    \Text(350,-224)[lb]{\Large{\Black{$1$}}}
    \Text(349,-187)[lb]{\Large{\Black{$1$}}}
  \end{picture}
}}\,}
\def\echelundeux{\,{\scalebox{0.5}{
\begin{picture}(45,63) (341,-230)
    \SetWidth{1.0}
    \SetColor{Black}
    \GOval(352,-220)(9,9)(0){0.882}
    \GOval(351,-183)(9,9)(0){0.882}
    \Line(352,-191)(352,-210)
    \Text(350,-224)[lb]{\Large{\Black{$1$}}}
    \Text(349,-187)[lb]{\Large{\Black{$2$}}}
  \end{picture}
}}\,}
\def\echeldeuxun{\,{\scalebox{0.5}{
\begin{picture}(45,63) (341,-230)
    \SetWidth{1.0}
    \SetColor{Black}
    \GOval(352,-220)(9,9)(0){0.882}
    \GOval(351,-183)(9,9)(0){0.882}
    \Line(352,-191)(352,-210)
    \Text(350,-224)[lb]{\Large{\Black{$2$}}}
    \Text(349,-187)[lb]{\Large{\Black{$1$}}}
  \end{picture}
}}\,}
\def\couronne{\,{\scalebox{0.5}{
\begin{picture}(91,58) (317,-235)
    \SetWidth{1.0}
    \SetColor{Black}
    \GOval(352,-225)(9,9)(0){0.882}
    \Text(350,-229)[lb]{\Large{\Black{$1$}}}
    \GOval(327,-193)(9,9)(0){0.882}
    \GOval(374,-194)(9,9)(0){0.882}
    \Line(367,-201)(357,-217)
    \Line(333,-198)(346,-217)
    \Text(326,-199)[lb]{\Large{\Black{$1$}}}
    \Text(373,-198)[lb]{\Large{\Black{$1$}}}
  \end{picture}
}}\,}
\def\echelununun{\,{\scalebox{0.5}{
\begin{picture}(47,99) (339,-194)
    \SetWidth{1.0}
    \SetColor{Black}
    \GOval(352,-184)(9,9)(0){0.882}
    \GOval(351,-147)(9,9)(0){0.882}
    \Line(352,-155)(352,-174)
    \Text(350,-188)[lb]{\Large{\Black{$1$}}}
    \Text(349,-151)[lb]{\Large{\Black{$1$}}}
    \GOval(349,-112)(9,9)(0){0.882}
    \Line(350,-121)(351,-137)
    \Text(347,-116)[lb]{\Large{\Black{$1$}}}
  \end{picture}
}}\,}

  %
  
  %
  
  %
  
%

%

  %
  
  %
  
%

%

  %
  
  %
  
  %
  
  %
  
  %
   
  %
   
  %
  
%

  %
  
%

%

   %
   
  %
   
%
 
  %
  
  %

 %
   
  %
   
  %

  %
  
  %
  
  %
  
  %
  
\begin{document}
\title{ The pre-Lie operad as a deformation of NAP }

\author{ Abdellatif Sa\" idi}
\address{ Facult\'e des Sciences de Monastir, Avenue de l'Environnement 5019 Monastir, Tunisie}
\address{Universit\'e Blaise Pascal, Laboratoire de math\'ematiques UMR 6620 et CNRS, Aubi\`ere, France}
         \email{Abdellatif.Saidi@fsm.rnu.tn}
         \email{abdellatif@math.univ-bpclermont.fr}

\date{November 9th. 2010}
\begin{abstract}
 We will define a family of multigraded operads $\mathcal{O}^{\lambda}$  dependent on a scalar parameter $\lambda$  which is a deformation of the operad $\mathcal{O}^0$. Forgetting the multi-graduation gives back the pre-Lie operad for $\lambda=1$ and the NAP operad for $\lambda=0$.
\end{abstract}
\maketitle
\noindent
{\bf{Keywords:}} Operads, pre-Lie algebras, NAP algebras, trees, deformations.\\
{\bf{MS classification (2010):}} 05C05, 16W30,18D50.


\tableofcontents

\section{Introduction}

 In this paper, $K$ is a field of characteristic zero. Operads are the correct framework to model many sorts of algebras, such as associative, commutative, Lie, Leibniz, dendriform and many others. The present paper will mainly focus on the two operads pre-Lie and NAP, governing pre-Lie algebras and non-associative permutative algebras respectively.\\
 
 The notion of operad (the terminology  "Operad" is due to J. P. May) appeared in the seventies in algebraic topology (J. Stasheff, J. P. May, J. M. Boardman, R. M. Vogt) \cite{Renaiss}. There has been since the nineties a renewed interest in this theory due to the discovery of the relationships with graph cohomology, Koszul duality, representation theory, combinatorics and quantum field theory \cite{Renaiss}.\\
 
This paper is organized as follows: the two first sections are  devoted to introductory background on operads, with an emphasis in the second section on the species point of view \cite{AJ} \cite{Chap2}. The third section is reminder of the pre-Lie and NAP operads introduced by F. Chapoton and M. Livernet in \cite{ChaLiv} and \cite{Liv}. The fourth section is devoted to the introduction of the notion of multigraded operads. Roughly speaking, these objects are closely related to operads colored with positive integers, except that we also take the additive  semi-group structure of $\mathbb{N}^*$ into account. The prototype is $Endop(V)$ where $V=\bigoplus_{n\geq 1} V_{n}$ is a positively graded vector space.\\

In the fifth section we return to rooted trees in the framework of multigraded operads, by giving a weight $\left|v\right|\in\mathbb{N}^*$ for any vertex $v$. We show how to consider pre-Lie as a deformation of NAP by introducing a family $\mathcal{O}^{\lambda}$ of multigraded operads, where $\lambda$ is any scalar in the field $K$. Setting $\lambda=0$ gives back the NAP operad and $\lambda=1$ gives back the pre-Lie operad when the multigraduation is forgotten. Finally in the last section we study the free $\mathcal{O}^{\lambda}$-algebra with one generator and we show that $\mathcal{O}^{\lambda}$ is binary and quadratic, by obtaining it as a quotient of a free binary multigraded operad by an ideal of deformed NAP relations.\\

\noindent
{\bf Acknowledgements:} I greatly thank Fr\'ed\'eric Chapoton for very helpful discussions. I also thank my two advisors Dominique Manchon and Mohamed Selmi for constant help and support.
 \section{Preliminaries and definitions}
 An operad is a combinatorial device coined for coding ''types of algebras''. Hence, for example, a Lie algebra is an algebra over some operad denoted by LIE and the operad As governs associative algebras. An operad $\mathcal{P}$ \cite{L}, \cite{GLE} (in the symmetric monoidal category of $K$ vector space) is given by a collection of vector spaces $\mathcal{P}(n)_{n\geq0}$, a right action of the symmetric group $S_n$ on $\mathcal{P}(n)$, and a collection of compositions:
 $$\begin{array}{ccccl}
 \circ_i&:&\mathcal{P}(n)\otimes\mathcal{P}(m)&\longrightarrow&\mathcal{P}(m+n-1),~~~~~i=1,...,n\\
& & (a,b)&\longmapsto& a\circ_i b
 \end{array}$$
 which $a\in\mathcal{P}(n)$ and $b\in\mathcal{P}(m)$  satisfies the following axioms:
 \begin{itemize}
 \item The two associativity conditions:
 \begin{eqnarray}
 (a\circ_i b)\circ_{i+j-1}c&=&a\circ_i (b\circ_j c),~~~~~~~~~~ \forall~i\in\{1,...,n\},~j\in\{1,...,m\},\\
 (a\circ_i b)\circ_{m+j-1}c&=&(a\circ_j c)\circ_{i}b,~~~~~~~~\forall~i,j\in\{1,...,n\},~ i< j
 \end{eqnarray}
 \item The unit axiom: there exists an object $\un\in\mathcal{P}(1)$ where:
 \begin{eqnarray}
 \un\circ a&=&a\\
 a\circ_i \un&=&a,~~~~~~\forall ~~i\in\{1,...,n\},~~a\in\mathcal{P}(n).
 \end{eqnarray}
 \item The equivariance axiom: for any $\sigma\in S_n ,\tau\in S_m$, we have:
 \begin{equation}
 a.\sigma \circ_{\sigma(i)}b.\tau=(a\circ_i b).\rho(\sigma,\tau),
 \end{equation}
 where $\rho(\sigma,\tau)\in S_{n+m-1}$ is defined by letting  $\tau$ permute the set $E_i = \{i,i+1,...,i+m-1\}$ of cardinality $m$, and then by letting $\sigma$ permute the set $\{1,...,i-1, E_i ,i+m,...,m+n-1\}$ of cardinality $n$. 
 \end{itemize}
The global composition is defined by $\gamma$:
 $$\begin{array}{ccccl}
 \gamma&:&\mathcal{P}(n)\otimes\mathcal{P}(k_1 )\otimes...\otimes\mathcal{P}(k_n )&\longrightarrow&\mathcal{P}(k_1 +...+k_n )\\
  & &(a;b_1 ,...,b_n )&\longmapsto & (\ldots((a\circ_n b_n )\circ_{n-1}b_{n-1})\ldots)\circ_1 b_1 .
 \end{array}$$
 The partial composition $\circ_i$ is defined by :
 \begin{equation}
 a\circ_i b=\gamma(a;\un,\ldots,\un,b,\un\ldots, \un)
 \end{equation}
 The operad $\mathcal{P}$ is augmented if $\mathcal{P}(0)=\{0\}$ and $\mathcal{P}(1)=K.\un$.
\begin{exam}{\bf{ The operad $Endop(V)$ where $V$ is a $K$-vector space}}\\
We work in the category of vector spaces over a field $K$, endowed with the usual tensor product. let $V$ be a  vector space. For any $n\geq1$, we define  $V^{\otimes n}$ by induction: $V^{\otimes 1}=V$ and $V^{\otimes n+1}=V\otimes V^{\otimes n}$. Let $Endop(V)(n)=\mathcal{L}(V^{\otimes n},V)$ the space of linear maps from  $V^{\otimes n}$ to $V$. The right action of the symmetric group $S_n$ on $Endop(V)(n)$ is defined by the left action on $V^{\otimes n}$ wich permutes the indices:
\begin{equation}
(f.\sigma)(v_1 \otimes\cdots\otimes v_n )=f(v_{\sigma^{-1}(1)}\otimes\cdots\otimes v_{\sigma^{-1}(n)}),~~~~~~\forall~~f\in Endop(V)(n).
\end{equation}
For any $f\in Endop(V)(n),~~~g\in Endop(V)(m)$ and for any $i\in\{1,\cdots,n\}$,we defined partial composition, denoted by $f\circ_i g$, as the  morphism in $Endop(V)(n+m-1)$ defined by:
\begin{equation}
(f\circ_i g) (v_1 ,\cdots,v_{n+m-1})=f\big(v_1 ,\cdots,v_{i-1},g(v_i ,\cdots,v_{i+m-1} ),v_{i+m},\cdots,v_{m+n-1}\big).
\end{equation}
The unit map is the identity $Id_V$ for the space $V$. The partial compositions just defined satisfy the axioms of an operad. 
\end{exam}
\begin{defn}
Let $\mathcal{P}$ and $\mathcal{Q}$ be two operads. An morphism of operads from  $\mathcal{P}$ to $\mathcal{Q}$ is a  sequence
$a=\{a(n),~n\in\mathbb{N}^* \}$ of $K[\sum_{n}{}]$-linear maps
$a(n):\mathcal{P}(n)\rightarrow\mathcal{Q}(n)$ such that:
\begin{enumerate}
\item $a(1)(\un)=\un,$
\item $\forall n,m\in\mathbb{N}^*
  ,i\in\{1,\ldots,n\},~\mu\in\mathcal{P}(n),~\nu\in\mathcal{P}(m),$
\begin{equation*}
a(n+m-1)(\mu\circ_i \nu)=a(n)(\mu)\circ_i a(m)(\nu).
\end{equation*}
\end{enumerate}
\end{defn}
\begin{defn}
Let $V$ be a $K$-vector space and $\mathcal{P}$ an operad. The space
$V$ is a $\mathcal{P}$-algebra or algebra on $\mathcal{P}$, if there exists
a morphism of operads $a$  from $\mathcal{P}$ to the operad
$Endop(V)$. For any $\mu\in\mathcal{P}(n),~~v_1 ,\ldots,v_n \in V,$ we use the simplified notation:
\begin{equation}\label{equationpalgebre}
\mu(v_1 ,\ldots,v_n )=a(n)(\mu)(v_1 \otimes\ldots\otimes v_n ).
\end{equation}
Let $(V,a)$ and $(W,b)$ be two $\mathcal{P}$-algebras. Let $\phi:
(V,a)\rightarrow (W,b)$ be a linear map.  $\phi$ is called an morphism
of $\mathcal{P}$-algebras if for any $\mu\in\mathcal{P}(n),~~v_1
,\ldots,v_n \in V$,
\begin{equation}\label{morphismepalgeb}
\phi\big(\mu(v_1 ,\ldots,v_n )\big)=\mu\big(\phi(v_1 ),\ldots,\phi(v_n )\big)
\end{equation}
\end{defn}
\begin{defn}{\bf{ The free $\mathcal{P}$-algebra on a vector space}}\label{freeal}\\
Let $\mathcal{P}$ be an operad and $V$ be a  vector space. We define
the free $\mathcal{P}-$algebra on $V$, denoted by
$\mathcal{F}_{\mathcal{P}}(V)$ by the following universal property:\\
There exists $i:V\rightarrow\mathcal{F}_{\mathcal{P}}(V)$ such that for any
$\mathcal{P}-$algebra $A$ and for any linear map $\varphi:
V\rightarrow A$, there exists a unique morphism of
$\mathcal{P}-$algebras
$\tilde{\varphi}:\mathcal{F}_{\mathcal{P}}(V)\rightarrow A$ such that the following diagram commutes:
 \diagramme{
\xymatrix{\mathcal{F}_{\mathcal{P}}(V)\ar[r]^{\tilde{\varphi}}&A\\
V\ar[u]_{i}\ar[ur]_{\varphi}&
}
}
\end{defn}
\begin{rmk}
The free $\mathcal{P}-$algebra on $V$ is unique to up isomorphism.
\end{rmk}
\begin{prop}\label{proposition libre}
For any operad $\mathcal{P}$ and any vector space $V$, we have:
\begin{equation}
\mathcal{F}_{\mathcal{P}}(V)\approx
\bigoplus_{n\in\mathbb{N}^*}\mathcal{P}(n)\otimes_{\sum_{n}{}}V^{\otimes n}.
\end{equation}
\end{prop}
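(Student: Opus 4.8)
The plan is to verify that the right-hand side, which I denote $F(V)=\bigoplus_{n\geq 1}\mathcal{P}(n)\otimes_{S_n}V^{\otimes n}$, satisfies the universal property of Definition~\ref{freeal}; the asserted isomorphism then follows from the uniqueness stated in the preceding remark. First I would endow $F(V)$ with a $\mathcal{P}$-algebra structure. For $\mu\in\mathcal{P}(k)$ and classes $x_j$ represented by $\nu_j\otimes(v^j_1\otimes\cdots\otimes v^j_{n_j})\in\mathcal{P}(n_j)\otimes_{S_{n_j}}V^{\otimes n_j}$, I set
\begin{equation*}
\mu(x_1,\ldots,x_k)=\gamma(\mu;\nu_1,\ldots,\nu_k)\otimes\big(v^1_1\otimes\cdots\otimes v^k_{n_k}\big),
\end{equation*}
the second factor being the ordered concatenation of the $V^{\otimes n_j}$. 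The equivariance axiom guarantees that this descends to the $S_n$-coinvariants, while the two associativity axioms for $\gamma$, together with the unit axiom for $\un$, say precisely that the assignment $\mu\mapsto\big((x_1,\ldots,x_k)\mapsto\mu(x_1,\ldots,x_k)\big)$ is a morphism of operads $\mathcal{P}\to Endop(F(V))$, i.e. a $\mathcal{P}$-algebra structure on $F(V)$.

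Next I would define $i\colon V\to F(V)$ by $i(v)=\un\otimes v$, the class in $\mathcal{P}(1)\otimes_{S_1}V$. The key observation driving both existence and uniqueness is that, by the unit axiom $\gamma(\mu;\un,\ldots,\un)=\mu$, one has $\mu(i(v_1),\ldots,i(v_n))=\mu\otimes(v_1\otimes\cdots\otimes v_n)$ for every $\mu\in\mathcal{P}(n)$. Hence $i(V)$ generates $F(V)$ as a $\mathcal{P}$-algebra, the $n$-th summand being exactly the span of the $n$-ary operations evaluated on $i(V)$.

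Given a $\mathcal{P}$-algebra $(A,b)$ and a linear map $\varphi\colon V\to A$, I would then define $\tilde{\varphi}\colon F(V)\to A$ on the $n$-th summand by $\tilde{\varphi}\big(\mu\otimes(v_1\otimes\cdots\otimes v_n)\big)=\mu\big(\varphi(v_1),\ldots,\varphi(v_n)\big)$, using the simplified notation for $A$. Well-definedness on the coinvariants is exactly the equivariance of the structure map $b$; the relation $\tilde{\varphi}\circ i=\varphi$ is immediate from the unit; and $\tilde{\varphi}$ is a morphism of $\mathcal{P}$-algebras because $b$ respects $\gamma$, so that expanding $\mu(x_1,\ldots,x_k)$ on either side produces the same element of $A$. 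Uniqueness follows from the generation property: any morphism $\psi$ with $\psi\circ i=\varphi$ must satisfy $\psi\big(\mu\otimes(v_1\otimes\cdots\otimes v_n)\big)=\psi\big(\mu(i(v_1),\ldots,i(v_n))\big)=\mu\big(\varphi(v_1),\ldots,\varphi(v_n)\big)$, whence $\psi=\tilde{\varphi}$.

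The main obstacle is the first step: checking that the operation on $F(V)$ is simultaneously well defined modulo the $S_n$-actions and a genuine operad morphism into $Endop(F(V))$. This is where the associativity axioms and, above all, the equivariance axiom with its permutation $\rho(\sigma,\tau)$ must be matched precisely against the reindexing of the concatenated tensor factors $V^{\otimes n_j}$. This bookkeeping of symmetric-group actions, rather than any conceptual difficulty, is the delicate part of the argument; everything else is a direct unwinding of the definitions.
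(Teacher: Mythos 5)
Your proposal is correct and follows essentially the same route as the paper: you endow $\bigoplus_{n\geq 1}\mathcal{P}(n)\otimes_{S_n}V^{\otimes n}$ with a $\mathcal{P}$-algebra structure via the global composition $\gamma$, take $i(v)=\un\otimes v$, define $\tilde{\varphi}$ by evaluating operations on $\varphi$ of the arguments, and deduce uniqueness from the fact that $i(V)$ generates (the paper phrases this as an induction on the graduation, which is the same argument). Your explicit attention to well-definedness on the $S_n$-coinvariants is a point the paper passes over silently, but it does not change the structure of the proof.
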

\begin{proof}
Let $\mathcal{P}$ be an  operad and   $V$  be a vector space. Let $A$ be a
 $\mathcal{P}-$algebra and $\varphi :V\rightarrow A$ a linear map. We set
$\mathcal{F}=\bigoplus_{n>0}\mathcal{P}(n)\otimes_{\sum_{n}{}}V^{\otimes
  n}$. Firstly  we prove that is  $\mathcal{F}$ is a
$\mathcal{P}-$algebra. For any $n\in\mathbb{N}^*$, we define:
$$\begin{array}{ccccl}
a(n)&:&\mathcal{P}(n)&\longrightarrow&Hom(\mathcal{F}^{\otimes
  n},\mathcal{F})\\
    & &\mu           &\longmapsto    & a(n)(\mu)
\end{array}$$
where for any $\nu_1 \in\mathcal{P}(k_1),\ldots,\nu_n \in\mathcal{P}(k_n),v_1
\in V^{\otimes k_1},\ldots,v_n \in V^{\otimes k_n}$,
\begin{equation}
a(n)(\mu)\big(\nu_1 \otimes v_1 ,\ldots,\nu_n \otimes v_n
\big)=\gamma(\mu,\nu_1 ,\ldots,\nu_n )\bigotimes(v_1 \otimes\ldots\otimes v_n ).
\end{equation}
$a$ is an morphism of operads. Also,
$$\begin{array}{ccl}
a(1)(\un)(\nu_1 \otimes v_1)&=&\gamma(\un,\nu_1)\otimes v_1\\
                              &=&\nu_1\otimes v_1
\end{array}$$
showing $a(1)(\un)=\un.$ Let
$n,m\in\mathbb{N}^*,\mu\in\mathcal{P}(n), \xi\in\mathcal{P}(m),
i\in\{1,\ldots,n\},~\nu_1 \in
\mathcal{P}(k_1),\ldots,\nu_{n+m-1}\in\mathcal{P}(k_{n+m-1}),v_1\in V^{\otimes
k_1},\ldots,v_{n+m-1}\in V^{\otimes n+m-1}$. We have:
 \begin{eqnarray*}
&&a(n)(\mu)\circ_i a(m)(\xi)\big(\nu_1 \otimes v_1,\ldots,\nu_{n+m-1}\otimes
v_{n+m-1}\big)\\
&=&a(n)(\mu)\left(\nu_1\otimes v_1,\ldots,\nu_{i-1}\otimes
v_{i-1},a(m)(\xi)(\nu_i \otimes v_i ,\ldots,\nu_{i+m-1}\otimes
v_{i+m-1}\big),\right.\\
 & &\left. \nu_{i+m}\otimes v_{i+m},\ldots,\nu_{n+m-1}\otimes
v_{n+m-1}\right.\big)\\
&=&a(n)(\mu)\left(\nu_1 \otimes v_1,\ldots,\nu_{i-1}\otimes
v_{i-1},\gamma(\xi,\nu_{i},\ldots,\nu_{i+m-1})\otimes \big(v_i \otimes\ldots\otimes
v_{i+m-1}\big),\right.\\
& &\left. \nu_{i+m}\otimes v_{i+m},\ldots,\nu_{n+m-1}\otimes
v_{n+m-1}\right)\\
&=&\gamma\big(\mu,\nu_1,\ldots,\nu_{i-1},\gamma(\xi,\nu_i,\ldots,\nu_{i+m-1}),\nu_{i+m},\ldots,\nu_{n+m-1}\big)\bigotimes
v_1 \otimes\ldots\otimes v_{n+m-1}\\
&=&\gamma\big(\mu\circ_i \xi,\nu_1,\ldots,\nu_{n+m-1}\big)\bigotimes
v_1\otimes\ldots\otimes v_{n+m-1}\\
&=&a(n+m-1)(\mu\circ_ i \xi)\big( \nu_1 \otimes v_1 ,\ldots,\nu_{n+m-1}\otimes v_{n+m-1}\big),
\end{eqnarray*}
which proves $a(n+m-1)(\mu\circ_i \xi)=a(n)(\mu)\circ_i a(m)(\xi)$. We define
$\tilde{\varphi}:\mathcal{F}\rightarrow A$ as follows: for any
$n>0,~~\mu\in\mathcal{P}(n)$ and $v_1 ,\ldots,v_n \in V,$ 
\begin{equation}
\tilde{\varphi}(\mu,v_1 \otimes\ldots\otimes v_n)=\mu\big( \varphi(v_1 ),\ldots,\varphi(v_n )\big),
\end{equation}
using the notation \eqref{equationpalgebre}. Since by construction $\tilde{\varphi}$ is an morphism of
$\mathcal{P}-$algebras \eqref{morphismepalgeb}, we have: 
\begin{equation}
\tilde{\varphi}\big(a(n)(\mu)(\nu_1\otimes v_1 ,\ldots,\nu_n\otimes
v_n\big)=\mu\big(\tilde{\varphi}(\nu_1\otimes
  v_1),\ldots,\tilde{\varphi}(\nu_n\otimes v_n)\big)
\end{equation}
which proves the  uniqueness of $\tilde{\varphi}$ by induction on the
graduation of $\mathcal{F}$.  We define:
$$\begin{array}{ccccl}
i&:&V&\longrightarrow&\mathcal{F}\\
 & &v&\longmapsto    & \un\otimes v
\end{array}$$
where $\un$ is the unit in the operad $\mathcal{P}$. It is clear than:
\begin{eqnarray*}
\tilde{\varphi}\circ i(v)&=&\tilde{\varphi}(\un\otimes v)\\
                         &=&\un\big(\varphi(v)\big)\\
                         &=&\varphi(v).
\end{eqnarray*}
  Then $\mathcal{F}$ satisfies the universal property above, which proves the proposition \ref{proposition libre}.
\end{proof}

 \section{Operads: an approach by species}
 The material presented here is mostly borrowed from \cite{Chap2}.
 
 \subsection{Categories}
 A category $\mathcal{C}$ is a collection of objects $Obj(\mathcal{C})$ and the data for all pairs of objects $(A,B)$ of a collection of morphisms (or arrows) from $A$ to $B$, which verify the axioms of a category \cite{SML}.
 \begin{exam}
 \begin{enumerate}
 \item Category of finite sets: objects are finite sets and arrows are the bijections.
 \item Category of vector spaces: objects are the vector spaces and arrows are linear maps.
 \end{enumerate}
 \end{exam}

 \subsection{Functors}

 A covariant functor  $\mathcal{F}$ is a "morphism" between two categories $\mathcal{C}$ and $\mathcal{D}$. Specifically, for every object $A$ of $\mathcal{C},~~~\mathcal{F}(A)$ is an object of $\mathcal{D}$ and for every arrow $f$ of $\mathcal{C},~~\mathcal{F}(f)$ is an arrow of $\mathcal{D}$ satisfying:
 $$\left\{\begin{array}{ccl}
 \mathcal{F}(Id_A )&=&Id_{\mathcal{F}(A)}~~\forall~A\in Obj(\mathcal{C})\\
 \mathcal{F}(g\circ f)&=&\mathcal{F}(g)\circ\mathcal{F}(f)~~~~\forall f,g~~\text{composable arrows of}~~\mathcal{C}
 \end{array}\right.$$

 \subsection{Species}

 Let $\mathcal{C}$ be a symmetric tensor category  \cite{SML}. A species in the category
  $\mathcal{C}$ \cite{AJ} is a functor
 $\mathcal{F}$ from the category of finite sets $\mathcal{F}_{in}$ to
 $\mathcal{C}$. Thus,  a species $\mathcal{F}$ provides an object
 $\mathcal{F}(A)$ for any finite set $A$ and an isomorphism
 $\mathcal{F}_{\varphi}: \mathcal{F}(A)\rightarrow\mathcal{F}(B)$ for any
 bijection $\varphi:A\rightarrow B$. In particular $Aut~A\approx S_n$ acts on
  $\mathcal{F}(A)$ if the cardinal of $A$ is equal to $n$. Let
 $\mathcal{F}$ and $\mathcal{G}$ two species. A morphism of
 species between $\mathcal{F}$ and $\mathcal{G}$ is a natural transformation
  $\psi :\mathcal{F}\longrightarrow \mathcal{G}$. So
 for any finite sets  $A$ and $B$ of the same cardinal and any
 bijection $\varphi$ from $A$ to $B$, the following diagram commutes:
 \diagramme{
\xymatrix{\mathcal{F}(A)\ar[r]^{\mathcal{F}_{\varphi}}\ar[d]_{\psi(A)}
&\mathcal{F}(B)\ar[d]^{\psi(B)}\\
\mathcal{G}(A)
\ar[r]_{\mathcal{G}_{\varphi}}&\mathcal{G}(B)
}
}

 \subsection{Species composition}

 Let $I$ be a finite set. Any equivalence relation  $R$ on  $I$ induces a partition of
 $I$ into blocks. Let $R$ and $R'$ two equivalence
 relations on $I$. We say that $R$ is finer than $R'$ and we write
 $R<R'$ if for any $x,y\in I,~~ xRy$ implies $xR'y$. Evidently any block $R$ is contained in a block of $R'$. We denote by $\mathcal{E}_{\mathcal{C}}$ the  category of species in $\mathcal{C}$. The species  composition is a  bifunctor  $\boxtimes:\mathcal{E}_{\mathcal{C}}\times\mathcal{E}_{\mathcal{C}}\rightarrow\mathcal{E}_{\mathcal{C}}$ such that for any finite set $I$,
 \begin{equation}\label{foncteur}
 (F\boxtimes G)(I)=\bigoplus_{R\models I}F(I/R)\otimes \bigotimes_{J\in I/R}G(J),
 \end{equation}
 where the notation $R\models I$ means that  $R$ is an equivalence relation on  $I$. $Aut ~I$ acts on the equivalence relations on $I$ and hence on $(F\boxtimes G)(I)$.
 \begin{rmk}
 The order on the different $J\in I/R$ is not specified in writing the tensor product in the right-hand side of Equation
 \eqref{foncteur}. The passage of a chosen sequence to another is an isomorphism given by the repeated use of flips $\tau_{AB}:A\otimes B\rightarrow B\otimes A.$
 \end{rmk}
 \begin{prop}
 The product $\boxtimes$ defined above is associative.
 \end{prop}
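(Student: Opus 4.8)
The plan is to expand each side using the definition \eqref{foncteur} together with the distributivity of $\otimes$ over $\oplus$, and then to exhibit a term-by-term natural isomorphism by identifying the sets indexing the two direct sums. First I would unfold the left-hand side: writing $S$ for an equivalence relation on $I$ (the innermost, $H$-level partition) and $\bar R$ for an equivalence relation on the quotient set $I/S$, one obtains
\[
\big((F\boxtimes G)\boxtimes H\big)(I)=\bigoplus_{S\models I}\ \bigoplus_{\bar R\models I/S} F\big((I/S)/\bar R\big)\otimes\!\!\bigotimes_{\bar J\in (I/S)/\bar R}\!\!G(\bar J)\ \otimes\!\!\bigotimes_{K\in I/S}\!\!H(K).
\]
Symmetrically, unfolding the right-hand side and distributing the tensor product across the inner direct sum gives
\[
\big(F\boxtimes (G\boxtimes H)\big)(I)=\bigoplus_{R\models I}\ \bigoplus_{(R_J)_J} F(I/R)\otimes\!\!\bigotimes_{J\in I/R}\!\Big(G(J/R_J)\otimes\!\!\bigotimes_{L\in J/R_J}\!\!H(L)\Big),
\]
the inner sum running over all families $(R_J)_{J\in I/R}$ with $R_J\models J$.

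The key step is a bijection between the two indexing sets, both of which I claim are naturally the set of pairs $(S,R)$ of equivalence relations on $I$ with $S$ finer than $R$. Given $(S,\bar R)$ on the left I would set $R$ to be the equivalence relation on $I$ whose blocks are the unions of the $S$-blocks lying in a common $\bar R$-class; then $S\preceq R$, there is a canonical identification $(I/S)/\bar R\cong I/R$, each $\bar R$-class $\bar J$ is exactly the set $J/S|_J$ of $S$-blocks contained in the corresponding $R$-block $J$, and the $S$-blocks $K\in I/S$ are precisely the blocks of the restrictions $S|_J$. Conversely, from a pair $(R,(R_J)_J)$ I would glue the $R_J$ into a single relation $S$ on $I$ (finer than $R$) by declaring $x\,S\,y$ when $x,y$ lie in a common $R$-block $J$ with $x\,R_J\,y$, and let $\bar R$ be the relation on $I/S$ that collects the $S$-blocks sitting inside a common $R$-block. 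These constructions are mutually inverse, so both sums are reindexed by the set of chains $S\preceq R$.

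Under this identification the three groups of tensor factors match up on the nose: the $F$-factor is $F((I/S)/\bar R)=F(I/R)$; the $G$-factors $\bigotimes_{\bar J}G(\bar J)$ become $\bigotimes_{J\in I/R}G(J/R_J)$ since $\bar J=J/R_J$; and the $H$-factors agree because both are indexed by the full set of $S$-blocks $I/S$, merely reorganised on the right according to which $R$-block each $S$-block lies in. The resulting isomorphism of each summand, and hence of the two direct sums, is what I would take as the associativity constraint.

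Finally I would check the two points that make this a genuine associativity of the bifunctor $\boxtimes$ on $\mathcal{E}_{\mathcal C}$. The reorganisation of the $H$-factors (and of the $G$-factors) involves no canonical ordering, so the isomorphism is only defined up to the flips $\tau_{AB}$ of the remark; I expect the main obstacle to be verifying that these reorderings are coherent, i.e.\ independent of the intermediate choices, which follows from Mac Lane's coherence theorem for the symmetric monoidal structure of $\mathcal C$ \cite{SML}. The second point is naturality: any bijection $\varphi\colon I\to I'$ carries a chain $S\preceq R$ to a chain $\varphi(S)\preceq\varphi(R)$ compatibly with the functoriality isomorphisms of $F$, $G$ and $H$, so the term-by-term isomorphism is $\mathrm{Aut}$-equivariant and natural in $I$. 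Together these give the associativity of $\boxtimes$.
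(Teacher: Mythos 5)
Your proposal is correct and follows essentially the same route as the paper: both expand the two sides via \eqref{foncteur} and reindex each by the set of nested pairs of equivalence relations on $I$ (your chains $S\preceq R$ are exactly the paper's pairs $R<\mathcal{S}$), matching the $F$-, $G$- and $H$-factors summand by summand. Your additional remarks on coherence of the flips $\tau_{AB}$ and on naturality in $I$ make explicit points the paper leaves implicit, but the core argument is identical.
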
 
 \begin{proof}
 Let $I$ be a finite set, $F,G$ and $H\in \mathcal{E}_{\mathcal{C}}$, then
 \begin{eqnarray*}
 \big((F\boxtimes G)\boxtimes H\big)(I)&=&\bigoplus_{R\models I}(F\boxtimes G)(I/R)\otimes\bigotimes_{J\in I/R}H(J)\\
 & =&\bigoplus_{R\models I}\bigg(\bigoplus_{\mathcal{S}'\models I/R}F((I/R)/\mathcal{S}')\otimes \bigotimes_{K'\in (I/R)_{/\mathcal{S}'}}G(K')\bigg)\otimes\bigotimes_{J\in I/R}H(J)\\
 &=&\bigoplus_{R\models I}\bigg(\bigoplus_{\mathcal{S}\models I, R<\mathcal{S}}F(I/\mathcal{S})\otimes\bigotimes_{K\in I/\mathcal{S}}G(K/R)\bigg)\otimes\bigotimes_{J\in I/R}H(J)\\
 &=&\bigoplus_{R<\mathcal{S}\models I}F(I/\mathcal{S})\otimes\bigotimes_{K\in I/\mathcal{S}} G(K/R)\otimes\bigotimes_{J\in I/R}H(J).
 \end{eqnarray*}
 While:
 \begin{eqnarray*}
 \big(F\boxtimes(G\boxtimes H)\big)(I)&=&\bigoplus_{\mathcal{S}\models I}F(I/\mathcal{S})\otimes\bigotimes_{J\in I/\mathcal{S}}(G\boxtimes H)(J)\\
 &=&\bigoplus_{\mathcal{S}\models I}F(I/\mathcal{S})\otimes\bigotimes_{J\in I/\mathcal{S}}\bigg(\bigoplus_{R\models J}G(J/R)\otimes\bigotimes_{K\in J/R}H(K)\bigg)\\
 &=&\bigoplus_{R<\mathcal{S}\models I}F(I/\mathcal{S})\otimes\bigotimes_{K\in I/\mathcal{S}}G(K/R)\otimes\bigotimes_{J\in I/R}H(J),
 \end{eqnarray*}
 Which proves the proposition. In the passage to the last line, we used the fact that giving an equivalence relation on each block of  $\mathcal{S}$ is equivalent to considering an equivalence relation $R<\mathcal{S}$ on $I$. 
 \end{proof}
 \begin{rmk}
 The product $\boxtimes$ behaves as a tensor product, apart from the fact that $F\boxtimes G\neq G\boxtimes F$ in general. The neutral element is the species $E$ such that $E(\{*\})=\un$ and $E(I)=0$ for any $I$ of cardinality greater than  $2$ or the empty set $I$.
 \end{rmk}
 {\bf {Notations :}} Let $I$ be a finite set and  $R$ an equivalence relation on $I$. We set $(F\boxtimes F)(I)_R =F(I/R)\otimes\bigotimes_{J\in I/R} F(J)~$, so that we have: $(F\boxtimes F)(I)=\bigoplus_{R\models I}(F\boxtimes F)(I)_R$. For  an automorphism $\sigma$ of $I$ and $J$  block of $I/R$, the restriction $\sigma\restr{J}$ is a bijection from $J$ onto $J.\sigma$. We denote by  $R^{\sigma}$ the equivalence relation on $I$ defined by $iRj\Leftrightarrow \sigma(i)R^{\sigma}\sigma(j)$ and we denote by $\bar{\sigma}$ the isomorphism of  $I/R$ onto $I/R^{\sigma}$  deduced from $\sigma$.
 
 \subsection{Background on operads}
 
  \begin{defn}
 An operad (in the category $\mathcal{C}$) is a monoid in the category
 $\mathcal{E}_{\mathcal{C}}$. Specifically, an operad is a species $F$ with a morphism
 $\gamma:F\boxtimes F\rightarrow F$ which is associative, i.e 
 $$\gamma(\gamma\boxtimes I)=\gamma(I\boxtimes \gamma).$$
  The morphism $\gamma$ defines for any set $I$ and any equivalence relation $R$ on $I$:
 \begin{equation}
 \gamma_{I,R} :F(I/R)\otimes \bigotimes_{J\in I/R}F(J)\longrightarrow F(I).
 \end{equation}
 The equivariance resumes in the following commutative diagram:
 \diagramme{
\xymatrix{F(I/R)\otimes\bigotimes_{J\in I/R}F(J)\ar[rrr]^{\gamma_{I,R}}\ar[d]_{(F\boxtimes F)(\sigma)\restr{(F\boxtimes F)(I)_R}}
&&&F(I)\ar[d]^{F(\sigma)}\\
F(I/R^{\sigma})\otimes\bigotimes_{J\in I/ R^{\sigma}}F(J)
\ar[rrr]_{\gamma_{I,R^{\sigma}}}&&&F(I)
}
}
where $(F\boxtimes F)(\sigma)=F(\bar{\sigma})\otimes\bigotimes_{J\in I/R}F(\sigma\restr{J})$.   
 \end{defn}
 
 \subsection{Partial composition}

 For simplicity we can assume that the operad $F$ is augmented i.e: $F(\emptyset)=0$ and $F(\{*\})=\un$. We consider on the finite set $I'$ an equivalence relation $R$ such that all blocks are singletons but one denoted by $J$. We set $I=I'/R$ and $i=\{J\}\in I$, hence $I'=I\backslash\{i\}\cup J$. We denote by  $\circ_i$ the composition: $\gamma:F(I'/R)\otimes F(J)\rightarrow F(I')$, hence $\gamma:F(I)\otimes F(J)\rightarrow F(I\backslash\{i\}\cup J)$. Other components are left of the form $F(\{*\})=\un$, so do not appear in the tensor product. Associativity is expressed by the commutativity of the two following diagrams:
 \diagramme{
\xymatrix{F(I)\otimes F(J)\otimes F(K)\ar[r]^{Id_I \otimes \circ_j}\ar[d]_{\circ_i}
&F(I)\otimes F(J\backslash\{j\}\cup K)\ar[d]^{\circ_i}\\
F(I\backslash\{i\}\cup J)\otimes F(K)
\ar[r]_{\circ_j}&F(I\backslash\{i\}\cup J\backslash\{j\}\cup K)
}
}
and
\diagramme{
\xymatrix{F(I)\otimes F(J)\otimes F(K)\ar[r]^{\circ_i \otimes Id}\ar[d]_{\tau^{2,3}}
&F(I\backslash\{i\}\cup J)\otimes F(K)\ar[dd]^{\circ_l}\\
F(I)\otimes F(K)\otimes F(J)\ar[d]_{\circ_l \otimes Id}&\\
F(I\backslash\{l\}\cup K)\otimes F(J)
\ar[r]_{\circ_i}&F(I\backslash\{i,l\}\cup J\cup K)
}
}
where $\tau^{2,3}=Id_{F(I)}\otimes\tau_{F(J),F(K)}$. The first diagram corresponds to the nested associativity while the second expresses disjoint associativity.

\section{Two operads of rooted trees: pre-Lie and NAP}

We recall briefly in this section the pre-Lie operad of rooted trees introduced by F. Chapoton and M. Livernet in  \cite{ChaLiv}. A kind of simplified version of it is the NAP operad introduced by M. Livernet \cite{Liv}. A rooted tree is a connected graph without loops, one of whose vertices is not departure of any edge; this vertex is called the root. Rooted trees are drawn with the root down (see \cite{F},\cite{CK1}). For all $n\geq1$, a tree of degree $n$ is a tree with $n$ vertices labelled from  $1$ to $n$. The set $\{1,\ldots,n\}$ is denoted $[n]$. We note $\mathcal{RT}(n)$ the space of labelled rooted trees  of degree $n$. Let $\mathcal{RT}=\bigoplus_{n\geq1}\mathcal{RT}(n)$. We endow $\mathcal{RT}$ with a structure of operad (see \cite{ChaLiv}): the action of the symmetric group is natural by permuting the labels of the vertices. For    a tree $T$ of degree $n$ and for all $i\in[n]$, we denote by  $E(T,i)$
the set of edges of  $T$ arriving at vertex $i$ of $T$.

\subsection{The pre-Lie operad}

We define the partial compositions $\circ_i ~:~\mathcal{RT}(n)\otimes\mathcal{RT}(m)\rightarrow\mathcal{RT}(n+m-1)$, as follows:
\begin{equation}
T\circ_i S=\sum_{f: E(T,i)\rightarrow [m]}{T\circ_{i}^{f}S},
\end{equation}
where $T\circ_{i}^{f}S$ is the tree of  $\mathcal{RT}(n+m-1)$ obtained by replacing the vertex $i$ of $T$ by the tree $S$ and connecting each edge $a$
in $E(T,i)$ at the vertex $f(a)$ of $S$. This new tree is labelled as follows: we add $i-1$ to those of $S$ and $m-1$ to those of $T$ that are greater than $i+1$. The root of the new tree is the root of  $T$ if it is different from vertex $i$, and of  $S$ else (see detail in
 \cite{ChaLiv}). The unit is the tree with a single vertex. These partial compositions define an operad which is the pre-Lie operad. We define: $T\vartriangleleft
S=\sum_{i=1}^{n}{(T\circ_{i}S)}$. The law $\vartriangleleft$ passes to the quotient by the symmetric groups, according to the equivariance axiom.
\begin{thm}
The space $\mathcal{F}_{PL}=\bigoplus_{n\geq1}\mathcal{RT}(n)/S_n$ equipped with the bilinear map
$\vartriangleleft$ defined above is a  right pre-Lie algebra. this theorem is true for any  augmented operad $\mathcal{O}$ i.e : such that  $\dim \mathcal{O}_1 =1$ \cite{ChaLiv}: namely $\mathcal{F}_{\mathcal{O}}:=\mathcal{F}_{\mathcal{O}}(V)$, with the notation of Definition \ref{freeal}, is a right pre-Lie algebra.
\end{thm}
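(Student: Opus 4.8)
The plan is to verify directly the defining identity of a right pre-Lie algebra, namely that the associator $\mathrm{as}(x,y,z):=(x\vartriangleleft y)\vartriangleleft z-x\vartriangleleft(y\vartriangleleft z)$ is symmetric in its last two arguments, i.e. $\mathrm{as}(x,y,z)=\mathrm{as}(x,z,y)$. Since $\vartriangleleft$ is induced on the quotient $\mathcal{F}_{PL}=\bigoplus_n\mathcal{RT}(n)/S_n$ (and on $\mathcal{F}_{\mathcal{O}}(V)$) from the operadic operation $a\vartriangleleft b=\sum_{i=1}^n a\circ_i b$, and since the equivariance axiom guarantees that $\vartriangleleft$ descends to the $S_n$-coinvariants while the $V$-labels are concatenated at the grafting slot coherently, it suffices to establish the identity at the level of the operad $\mathcal{O}$, working with representatives $x\in\mathcal{O}(n)$, $y\in\mathcal{O}(m)$, $z\in\mathcal{O}(p)$.

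First I would expand $(x\vartriangleleft y)\vartriangleleft z=\sum_{i=1}^n\sum_{k=1}^{n+m-1}(x\circ_i y)\circ_k z$ and split the inner sum according to the position of $k$ relative to the copy of $y$ occupying the slots $\{i,\dots,i+m-1\}$ of $x\circ_i y$. When $k=i+j-1$ with $1\le j\le m$, the composite lands inside $y$, and the first (nested) associativity axiom $(x\circ_i y)\circ_{i+j-1}z=x\circ_i(y\circ_j z)$ applies; summing these contributions gives
\[
\sum_{i=1}^n x\circ_i\Big(\sum_{j=1}^m y\circ_j z\Big)=\sum_{i=1}^n x\circ_i(y\vartriangleleft z)=x\vartriangleleft(y\vartriangleleft z).
\]
Thus the nested part cancels exactly the term $x\vartriangleleft(y\vartriangleleft z)$ in the associator, and $\mathrm{as}(x,y,z)$ reduces to its \emph{disjoint} part, where $z$ is grafted onto a vertex $j$ of $x$ distinct from $i$.

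It then remains to prove this disjoint part is symmetric under $y\leftrightarrow z$. The surviving terms are indexed by ordered pairs $(i,j)$ of distinct vertices of $x$: for $i<j$ one obtains $(x\circ_i y)\circ_{m+j-1}z$, and for $j<i$ one obtains $(x\circ_i y)\circ_j z$. The second (disjoint) associativity axiom, in the form $(x\circ_i y)\circ_{m+j-1}z=(x\circ_j z)\circ_i y$ together with its mirror for $j<i$, identifies each such term with a single object $I(i\mapsto y,\,j\mapsto z)$ that is independent of the order in which $y$ and $z$ are grafted; in particular $I(i\mapsto y,\,j\mapsto z)=I(j\mapsto z,\,i\mapsto y)$. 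Hence
\[
\mathrm{as}(x,z,y)=\sum_{i\neq j}I(i\mapsto z,\,j\mapsto y)=\sum_{i\neq j}I(i\mapsto y,\,j\mapsto z)=\mathrm{as}(x,y,z),
\]
the middle equality coming from the relabelling $(i,j)\mapsto(j,i)$ combined with the order-independence of $I$; this is precisely the right pre-Lie identity. The same computation applies verbatim to any augmented $\mathcal{O}$, the labels in $V^{\otimes\bullet}$ being inserted identically on both sides, so that $\mathcal{F}_{\mathcal{O}}(V)$ is a right pre-Lie algebra. I expect the only genuine difficulty to be the careful index bookkeeping in the splitting: matching each $\circ_k$ to either an internal slot of $y$ or a vertex $j\neq i$ of $x$, and tracking the shifts $m-1$ and $p-1$ correctly through the two associativity axioms.
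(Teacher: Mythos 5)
Your proof is correct. The paper itself offers no proof of this theorem --- it defers entirely to Chapoton--Livernet \cite{ChaLiv} --- and your argument is precisely the standard one from that reference: expand $(x\vartriangleleft y)\vartriangleleft z=\sum_{i}\sum_{k}(x\circ_i y)\circ_k z$, cancel the nested terms (those with $k$ landing inside the copy of $y$) against $x\vartriangleleft(y\vartriangleleft z)$ via the first associativity axiom, and observe that the surviving disjoint terms, which the second associativity axiom identifies with graftings of $y$ and $z$ at two distinct slots of $x$ independently of the order of grafting, are manifestly symmetric under $y\leftrightarrow z$. The only point to watch is the final claim about $\mathcal{F}_{\mathcal{O}}(V)$: like the paper, you leave the product on the free algebra undefined for general $V$, but $\vartriangleleft$ is only well defined on $\bigoplus_n\mathcal{O}(n)\otimes_{S_n}V^{\otimes n}$ when $V$ is one-dimensional, since the label sitting in the slot that receives the graft is discarded, an operation that is linear only for $V=K$; so the statement should be read as concerning the free algebra on one generator, which is exactly the coinvariant space $\bigoplus_n\mathcal{O}(n)/S_n$ on which your operadic computation, transported by equivariance, does apply.
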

\begin{rmk}
The partial compositions defined above give rise to two right pre-Lie structures $\vartriangleleft$ and  $\leftarrow$ on $\mathcal{F}_{PL}$  \cite{ChaLiv}, the first acting on the second by derivation \cite{MS}. The pre-Lie structure $\leftarrow$ is defined by:
\begin{equation}
T\leftarrow S=\gamma(\echelundeux,T,S),
\end{equation}
called the grafting $S$ on $T$ and we have the following  derivation relation:
\begin{equation}
(T\leftarrow S)\vartriangleleft U=(T\vartriangleleft U)\leftarrow S+T\leftarrow(S\vartriangleleft U)~\forall~~ S,T,U\in \mathcal{F}_{PL}.
\end{equation}
\end{rmk}

\subsection{The NAP operad}

We define \cite{Liv} the partial compositions $\circ_i ~:~\mathcal{RT}(n)\otimes\mathcal{RT}(m)\rightarrow\mathcal{RT}(n+m-1)$, as follows:
\begin{equation}
T\circ_i S={T\circ_{i}^{f_0}S},
\end{equation}
where $T\circ_{i}^{f_0}S$ is the tree of  $\mathcal{RT}(n+m-1)$ obtained by replacing the vertex $i$ of $T$ by the tree $S$ and connecting each edge $a$
in $E(T,i)$ at the root of $S$. 
  The unit is the tree with a single vertex. We define: $T\odot
S=\sum_{i=1}^{n}{(T\circ_{i}S)}$. The law $\odot$ passes to the quotient by the symmetric groups, according to the equivariance axiom.
\begin{thm}
The space $\mathcal{F}_{NAP}=\bigoplus_{n\geq1}\mathcal{RT}(n)/S_n$ equipped with the bilinear map
$\odot$ defined above is a  right pre-Lie algebra.  
\end{thm}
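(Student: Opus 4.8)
The plan is to read this statement as an immediate instance of the general principle already recorded in the preceding theorem: for \emph{any} augmented operad $\mathcal{O}$, meaning one with $\dim\mathcal{O}(1)=1$, the symmetrized partial composition $x\vartriangleleft y=\sum_i x\circ_i y$ turns $\mathcal{F}_{\mathcal{O}}$ into a right pre-Lie algebra. Hence the only point genuinely specific to NAP that I would verify is that the NAP operad is augmented, and this is immediate: $\mathcal{RT}(1)$ consists of the single one-vertex labelled tree, which is exactly the operadic unit, so $\dim\mathcal{RT}(1)=1$. Since $\odot$ is by definition the symmetrization $T\odot S=\sum_{i=1}^{n}(T\circ_i S)$ of the NAP partial compositions, the general statement applies word for word and delivers the conclusion.

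For a self-contained verification I would instead check the right pre-Lie identity
$$(T\odot S)\odot U-T\odot(S\odot U)=(T\odot U)\odot S-T\odot(U\odot S)$$
directly from the two associativity axioms of an operad, noting that the argument never uses the particular shape of the NAP grafting, only that the maps $\circ_i$ satisfy nested and disjoint associativity. For $T\in\mathcal{RT}(n)$ and $S\in\mathcal{RT}(m)$ I would expand $(T\odot S)\odot U=\sum_{i}\sum_{k}(T\circ_i S)\circ_k U$ and split the inner sum according to whether the slot $k$ of $T\circ_i S$ originates in $S$ or in $T$.

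The terms with $k$ in the $S$-part are governed by nested associativity, $(T\circ_i S)\circ_{i+j-1}U=T\circ_i(S\circ_j U)$; summing over $j$ and then over $i$ collapses them to $\sum_i T\circ_i(S\odot U)=T\odot(S\odot U)$, so they cancel against the second term on the left. The terms with $k$ in the $T$-part are governed by disjoint associativity, $(T\circ_i S)\circ_{m+\ell-1}U=(T\circ_\ell U)\circ_i S$ for $i<\ell$ (together with the mirror case $\ell<i$); these describe grafting $S$ at one vertex of $T$ and $U$ at a \emph{different} vertex, and summing over all ordered pairs of distinct slots is manifestly invariant under $S\leftrightarrow U$. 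Thus $(T\odot S)\odot U-T\odot(S\odot U)$ is symmetric in $S$ and $U$, which is precisely the right pre-Lie identity.

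The only delicate point on the direct route is the index bookkeeping in the disjoint step — tracking how the slot labels of $T$ are shifted by $m-1$ after the insertion of $S$, and matching the two insertion orders. This is routine but error-prone, and since the preceding theorem has already packaged exactly this computation for arbitrary augmented operads, I would favour the one-line corollary and relegate the direct calculation to a remark.
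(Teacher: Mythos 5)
Your proposal is correct and is essentially the paper's own route: the paper gives no separate proof for this theorem, relying instead on the remark attached to the preceding theorem that for \emph{any} augmented operad the symmetrized composition $\sum_i \circ_i$ makes $\bigoplus_{n\geq1}\mathcal{P}(n)/S_n$ a right pre-Lie algebra (citing Chapoton--Livernet), and NAP qualifies since $\mathcal{RT}(1)$ is spanned by the single one-vertex tree. Your supplementary direct verification — nested associativity cancelling the $T\odot(S\odot U)$ terms and disjoint associativity making the remaining double sum symmetric in $S$ and $U$ — is precisely the standard proof of that general fact, so nothing is missing.
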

\begin{rmk}
The partial compositions defined above also gives rise to a right NAP structure $\swarrow$ on $\mathcal{F}_{NAP}$. The NAP structure $\swarrow$ is defined by:
\begin{equation}
T\swarrow S=\gamma(\echelundeux,T,S),
\end{equation}
called the (right) Butcher product of $T$ and $S$, and we have the following  derivation relation:
\begin{equation}
(T\swarrow S)\odot U=(T\odot U)\swarrow S+T\swarrow(S\odot U)~\forall~~ S,T,U\in \mathcal{F}_{NAP}.
\end{equation}
\end{rmk}

 \section{The notion of multigraded operad}

 This section defines the notion of multigraded operad. The model of which is the operad $Endop(V)$ where $V$ is a graded vector space. Multigraded operads are closely related to colored operads, except that a semi-group structure on the set of colors is taken into account.

 \subsection{ General definition}\label{multigraduee}

  Let $\mathcal{O}=\bigoplus_{n\geq1}\mathcal{O}_n$ be an  operad. We say that $\mathcal{O}$ has a structure of multigraded operad, if moreover $\mathcal{O}_n =\prod\mathcal{O}_{n, a_1 ,\ldots,a_n},~~a_j\in\mathbb{N}^*$ where:
 \begin{itemize}
 \item The right action of the symmetric group is verifies:
 \begin{equation}
 (\mathcal{O}_{n,a_1 ,\ldots,a_n}).\sigma=\mathcal{O}_{n,a_{\sigma(1)},\ldots,a_{\sigma(n)}}~~\forall~\sigma \in S_n~.
 \end{equation}
 \item For any $n,m\in\mathbb{N}^*$, we have:
 \begin{equation}
 \circ_i :\mathcal{O}_{n,a_1 ,\ldots,a_n}\otimes\mathcal{O}_{m,b_1 ,\ldots,b_m}\rightarrow\mathcal{O}_{m+n-1,a_1 ,\ldots,a_{i-1},b_1,\ldots,b_m ,a_{i+1},\ldots,a_n},
 \end{equation}
 with have image zero if $\sum_{j=1}^{m}{b_j}\neq a_i$.
 \end{itemize}

\subsection{Example: the multigraded operad $Endop(V)$ for a graded vector space $V$}

 Let $V=\bigoplus_{j\geq0} V_j$ be a graded vector space. Let $Endop(V)(n)$ be the space of homogenous degree zero morphisms from $V^{\otimes n}$ to $V$, i.e:
\begin{eqnarray*}
Endop(V)(n)&=&\mathcal{L}_0 (V^{\otimes n},V)\\
           &=&\prod_{a_1 ,\ldots,a_n , n\geq1}\mathcal{L}(V_{a_1}\otimes\ldots\otimes V_{a_n},V_{a_1 +\ldots+a_n}).
 \end{eqnarray*}
 Thus, for any  $\alpha\in\mathcal{L}_0 (V^{\otimes n},V)$ and for any $a_1,\ldots,a_n\in \mathbb{N}$, we define $\alpha_{a_1 ,\ldots,a_n}$ by: 
 $$\left\{\begin{array}{ccl}
 \alpha_{a_1 ,\ldots,a_n}(v_1 \otimes\ldots\otimes v_n)&=&\alpha(v_1 \ldots v_n )~~~\text{if }~~v_j\in V_{a_j} ~~\forall~j\in\{1,\ldots,n\}\\
  &=&0~~\text{else}.\end{array}\right.$$
  The partial compositions  $\circ_i$ satisfy:
  \begin{equation}
 (\alpha\circ_i \beta)_{c_1 ,\ldots,c_{n+m-1}}=\alpha_{c_1,\ldots,c_{i-1},\left(\sum_{j=i}^{i+m-1}{c_j}\right),c_{i+m},\ldots,c_{m+n-1}}\circ_i \beta_{c_i ,\ldots,c_{i+m-1}},
 \end{equation}
  and therefore 
 \begin{equation} 
 \alpha_{a_1 ,\ldots,a_n}\circ_i \beta_{b_1 ,\ldots,b_m}=0,
 \end{equation}
 except perhaps if $a_i =\sum_{j=1}^{m}{b_j}$. Thus:
 \begin{thm}
 $Endop(V)$ is a multigraded operad in the sense of paragraph \ref{multigraduee}
 \end{thm}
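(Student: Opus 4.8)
The plan is to verify the three defining requirements of a multigraded operad from paragraph~\ref{multigraduee} directly, reading each one off from the explicit description of $Endop(V)$ already established. All three are statements about how homogeneous degrees behave, so no genuinely new idea is required; the whole content lies in tracking multi-indices through formulas that are, for the most part, already displayed in the text. For the first requirement I would fix the multigraded decomposition by setting
$$\mathcal{O}_{n,a_1,\ldots,a_n}:=\mathcal{L}(V_{a_1}\otimes\cdots\otimes V_{a_n},V_{a_1+\cdots+a_n}),$$
which is exactly the factor appearing in the displayed product decomposition of $Endop(V)(n)=\mathcal{L}_0(V^{\otimes n},V)$. Here I would restrict to multi-indices with every $a_j\in\mathbb{N}^*$, i.e.\ work with the positively graded part of $V$, so that the semigroup of colors is $\mathbb{N}^*$ as demanded; the component $\alpha_{a_1,\ldots,a_n}$ introduced in the text is then precisely the projection of $\alpha$ onto this factor.

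For the equivariance requirement I would start from the right action $(f.\sigma)(v_1\otimes\cdots\otimes v_n)=f(v_{\sigma^{-1}(1)}\otimes\cdots\otimes v_{\sigma^{-1}(n)})$ and ask, for $f\in\mathcal{O}_{n,a_1,\ldots,a_n}$, when the value can be nonzero. This needs $v_{\sigma^{-1}(k)}\in V_{a_k}$ for every $k$, which after the substitution $j=\sigma^{-1}(k)$ reads $v_j\in V_{a_{\sigma(j)}}$. Hence $f.\sigma$ is supported exactly on the grade $(a_{\sigma(1)},\ldots,a_{\sigma(n)})$, giving $(\mathcal{O}_{n,a_1,\ldots,a_n}).\sigma=\mathcal{O}_{n,a_{\sigma(1)},\ldots,a_{\sigma(n)}}$.

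The third requirement, compatibility of $\circ_i$ with the grading, is the heart of the statement, and here I would lean on the composition formula already displayed, namely
$$(\alpha\circ_i\beta)_{c_1,\ldots,c_{n+m-1}}=\alpha_{c_1,\ldots,c_{i-1},(\sum_{j=i}^{i+m-1}c_j),c_{i+m},\ldots,c_{m+n-1}}\circ_i\beta_{c_i,\ldots,c_{i+m-1}}.$$
Feeding in $\alpha=\alpha_{a_1,\ldots,a_n}$ and $\beta=\beta_{b_1,\ldots,b_m}$, the right-hand side is nonzero only if $c_k=a_k$ for $k<i$, if $c_{i+\ell-1}=b_\ell$ for $1\leq\ell\leq m$, if $c_k=a_{k-m+1}$ for $k>i+m-1$, and if the distinguished index of $\alpha$ matches, i.e.\ $\sum_{j=i}^{i+m-1}c_j=a_i$. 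As that sum equals $b_1+\cdots+b_m$, the product lands in the single grade $(a_1,\ldots,a_{i-1},b_1,\ldots,b_m,a_{i+1},\ldots,a_n)$ and vanishes unless $\sum_{j=1}^m b_j=a_i$, which is exactly the prescribed target. A one-line degree count then confirms that $\alpha\circ_i\beta$ is again homogeneous of degree zero: under the matching constraint, inserting $\beta(v_i,\ldots,v_{i+m-1})\in V_{b_1+\cdots+b_m}=V_{a_i}$ into the $i$-th slot of $\alpha$ leaves the total input degree equal to $a_1+\cdots+a_n$, which is the output degree.

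I expect no serious obstacle. Every ingredient is a routine unwinding of definitions, and the one place that demands attention is the index bookkeeping in the last step, where one must carry the relabelling of the slots lying to the right of $i$ (the shift $k\mapsto k-m+1$) correctly through the composition formula. Since both that formula and the vanishing statement $\alpha_{a_1,\ldots,a_n}\circ_i\beta_{b_1,\ldots,b_m}=0$ unless $a_i=\sum_j b_j$ are already recorded above, the proof amounts to assembling these three observations into the checklist of paragraph~\ref{multigraduee}.
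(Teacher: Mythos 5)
Your proposal is correct and follows essentially the same route as the paper, which states this theorem as an immediate consequence (``Thus:'') of the preceding discussion: the product decomposition $Endop(V)(n)=\prod_{a_1,\ldots,a_n}\mathcal{L}(V_{a_1}\otimes\cdots\otimes V_{a_n},V_{a_1+\cdots+a_n})$, the displayed component formula for $\circ_i$, and the resulting vanishing condition $\alpha_{a_1,\ldots,a_n}\circ_i\beta_{b_1,\ldots,b_m}=0$ unless $a_i=\sum_j b_j$. Your only additions are the explicit equivariance check and the restriction to positive degrees, both of which the paper leaves implicit but which are routine and consistent with its intent.
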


 \section{The multigraded operad $\mathcal{O}^{\lambda}$}

 \begin{defn}
 We define trees with weights on their vertices:
 $\racineun ,\racinedeux ,\racinetrois ,\ldots$.
 \begin{exam}
 $\racineun ,\racinedeux ,\racinetrois ,\echelunun,\echelundeux,\echeldeuxun,\couronne,\echelununun$ are the trees with weight less or equal to $3$.
 \end{exam} 
 For a tree $T$, we define the weight of  $T$ by:
 \begin{equation}
 \left|T\right|=\sum_{v\in v(T)}{\left|v\right|},
 \end{equation}
 where $v(T)$ denotes the set of vertices of $T$, and $\left|v\right|$ is the weight of $v$. 
 \end{defn}
 We draw trees with labels and numbers on their vertices, each number refers to the weight of the vertex.
 \begin{defn}
 We define the potential energy of a tree by:
 \begin{equation}
 d(T)=\sum_{v\in v(T)}{\left|v\right|h(v)},
 \end{equation}
 where $h(v)$ is the height of $v$ in $T$, i.e. the distance from $v$ to the root of $T$ counting the number of edges.
 \end{defn}
Let $\lambda$ be an element of the field $K$. For any  $n\in \mathbb{N}^*$, let $\mathcal{O}_{n}^{\lambda}$ the vector space spanned by the trees with $n$ vertices of any weight. Let $\mathcal{O}^{\lambda}=\bigoplus_{n\geq1} \mathcal{O}_{n}^{\lambda}$. For all $S\in \mathcal{O}_{n}^{\lambda}$ and any vertex $v$ of $S$, $E(S,v)$ denotes the set of edges of $S$ arriving at the vertex  $v$ of $S$. Let $T\in \mathcal{O}_{m}^{\lambda}$, we define the partial compositions by:
$$S\circ_{v,\lambda} T=\left\{\begin{array}{l}
\sum_{f:E(S,v)\rightarrow v(T)}{\lambda^{d(S\circ_{v}^{f} T)-d(S\circ_{v}^{f_0}T)}S\circ_{v}^{f}T}~~~~\text{si}~~\left|~T\right|=\left|v\right|\\
 0~~~~~\text{otherwise},
\end{array}\right.$$
where  $S\circ_{v}^{f}T$ is the element of $\mathcal{O}_{n+m-1}^{\lambda}$ obtained by replacing the vertex  $v$ by $S$ and connecting each edge of $E(S,v)$ to its
 image by $f$ in $v(T)$. Here  $f_0$ is the map from $E(S,v)$ to
$v(T)$ that sends each edge $a$ of $E(S,v)$ to the root of $T$.
The tree $S\circ_{v}^{f_0}T$  has, therefore the smallest potontial energy in the above sum. Unity is given by:
$$\un=\sum_{n\geq1}{\racinen},$$
where $\racinen$ is the tree with one single vertex of weight $n$. The action of the symmetric group  $S_n$ on $\mathcal{O}_{n}^{\lambda}$ is given by permutation of the labels.
\begin{exam}
For $S=\arbaa$ and $T=\arbab$. There letters $a,b,c...$ are labels of vertices, which are of weight $1,2$ or $3$.  We have:
 \begin{align*}
S\circ_{b,\lambda} T&=&\arbac&+\lambda\arbad&+\lambda^{2}\arbae\\
          & &\hskip -8mm f_0
          :\left\{\begin{array}{ccc}c&\rightarrow&e\\d&\rightarrow&e\end{array}\right.\hbox to 10mm{}&f:\left\{\begin{array}{ccc}c&\rightarrow&e\\d&\rightarrow&h\end{array}\right.&f:\left\{\begin{array}{ccc}c&\rightarrow&h\\d&\rightarrow&e\end{array}\right.\\
        & &      &              &\\
&&         +\lambda^{3}\arbaf&&\\
&&f:\left\{\begin{array}{ccc}c&\rightarrow&h\\d&\rightarrow&h\end{array}\right.&&
\end{align*}
\end{exam}
\begin{thm}
the partial compositions defined above on  $\mathcal{O}^{\lambda}$  yield a structure of multigraded operad as defined in the previous paragraph.
\end{thm}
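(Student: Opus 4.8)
The plan is to first fix the multigraduation and dispose of the grading, equivariance and unit axioms, and then to concentrate on associativity, whose verification requires controlling the powers of $\lambda$.

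For the multigraduation I would set $\mathcal{O}_{n,a_1,\ldots,a_n}^{\lambda}$ to be the subspace spanned by the trees on $n$ vertices in which the vertex labelled $i$ carries weight $a_i$, so that $\mathcal{O}_n^{\lambda}=\prod_{a_1,\ldots,a_n}\mathcal{O}_{n,a_1,\ldots,a_n}^{\lambda}$. Both conditions of paragraph \ref{multigraduee} are then immediate: relabelling by $\sigma\in S_n$ merely permutes the weights, giving $(\mathcal{O}_{n,a_1,\ldots,a_n}^{\lambda}).\sigma=\mathcal{O}_{n,a_{\sigma(1)},\ldots,a_{\sigma(n)}}^{\lambda}$; and in $S\circ_{v,\lambda}T$ the vertices of $S$ other than $v$ keep their weights while those of $T$ take the place of $v$, so the output weight sequence is $(a_1,\ldots,a_{i-1},b_1,\ldots,b_m,a_{i+1},\ldots,a_n)$, the clause ``$0$ otherwise'' making the composition vanish unless $\left|T\right|=\left|v\right|$, i.e. unless $\sum_j b_j=a_i$. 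Equivariance is inherited from the pre-Lie/NAP combinatorics, since $d$ — hence each coefficient $\lambda^{d(\cdots)-d(\cdots)}$ — depends only on the weighted \emph{shape} of a tree and not on its labels. For the unit $\un=\sum_{n\geq1}\racinen$, in both $\un\circ S$ and $S\circ_{v,\lambda}\un$ the weight constraint selects the single one-vertex summand whose weight matches ($\left|S\right|$, resp. $\left|v\right|$), for which the only admissible map is $f_0$ and the exponent is $0$, returning $S$.

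The engine of the proof is a single additive formula. I would first show that whenever $\left|T\right|=\left|v\right|$ and $f\colon E(S,v)\to v(T)$,
\begin{equation}\label{cle}
d(S\circ_v^f T)=d(S)+d(T)+\sum_{a\in E(S,v)}\left|S_a\right|\,h_T\big(f(a)\big),
\end{equation}
where $S_a$ is the subtree of $S$ hanging from the edge $a$ and $h_T$ is the height function of $T$. Indeed, inserting $T$ in place of $v$ raises each vertex of $T$ by $h(v)$ and each vertex of the subtree $S_a$ by exactly $h_T(f(a))$; the contribution $h(v)\left|T\right|$ gained by lifting $T$ and the contribution $h(v)\left|v\right|$ lost by deleting $v$ from $d(S)$ cancel precisely because $\left|T\right|=\left|v\right|$. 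In particular $d(S\circ_v^{f_0}T)=d(S)+d(T)$, so the exponent attached to $S\circ_v^f T$ equals $\sum_a\left|S_a\right|h_T(f(a))$, a sum of independent edge contributions.

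The main obstacle is nested associativity, and this is exactly where \eqref{cle} does the work. On the underlying trees I would invoke the standard pre-Lie bijection between the pair of maps $(f,g)$ indexing $(S\circ_{v,\lambda}T)\circ_{w,\lambda}U$, with $w\in v(T)$ and $g$ defined on $E(T,w)\sqcup f^{-1}(w)$, and the pair $(g',f')$ indexing $S\circ_{v,\lambda}(T\circ_{w,\lambda}U)$, where $g'=g|_{E(T,w)}$ and $f'(a)$ equals $f(a)$ or $g(a)$ according as $f(a)\neq w$ or $f(a)=w$; this identifies the resulting trees. There remains to match the powers of $\lambda$: applying \eqref{cle} twice on each side and splitting $E(S,v)$ according to whether $f(a)$ equals $w$, is a proper descendant of $w$ (say $f(a)\in T_c$ with $c\in E(T,w)$), or is neither, one checks that both total exponents collapse to the same four-term sum, built from $\sum_a\left|S_a\right|h_T(f(a))$, from $\sum_{c\in E(T,w)}\left|T_c\right|h_U(g(c))$, from $\sum_{a\colon f(a)=w}\left|S_a\right|h_U(g(a))$, and from $\sum_{c}\sum_{a\colon f(a)\in T_c}\left|S_a\right|h_U(g(c))$; hence the coefficients agree. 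The recurrent use of $\left|T\right|=\left|v\right|$ — to guarantee that the subtree weights $\left|P_c\right|$ appearing for the composite $P=S\circ_v^f T$ equal the expected values — is the delicate point.

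Disjoint associativity is then easy: composing at two distinct vertices $v_1,v_2$ of $S$ (with trees $T$ and $U$), formula \eqref{cle} shows the two compositions contribute the independent exponents $\sum_{a\in E(S,v_1)}\left|S_a\right|h_T(f_1(a))$ and $\sum_{a\in E(S,v_2)}\left|S_a\right|h_U(f_2(a))$, and because $\left|T\right|=\left|v_1\right|$ the weight of any subtree containing $v_1$ is unchanged by the first grafting; the two exponents therefore add and the order of composition is irrelevant. Combining the grading, equivariance and unit checks with the two associativities proves that $\mathcal{O}^{\lambda}$ is a multigraded operad in the sense of paragraph \ref{multigraduee}.
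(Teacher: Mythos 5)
Your proof is correct and takes essentially the same route as the paper: the same split into nested and disjoint associativity, the same bijection between pairs of grafting maps $(f,g)\leftrightarrow(\tilde{f},\tilde{g})$, and the same key energy fact --- your additive formula $d(S\circ_{v}^{f}T)=d(S)+d(T)+\sum_{a\in E(S,v)}\left|S_a\right|h_T\big(f(a)\big)$ is exactly the absolute form of the paper's $\epsilon(f)=\sum_{e\in E(S,v)}h\big(f(e)\big)\left|B_e\right|$. The remaining differences are bookkeeping: you verify the grading, unit and equivariance axioms explicitly (the paper leaves them implicit), and you establish the equality of $\lambda$-exponents by computing both sides as the same explicit four-term sum, whereas the paper reaches the same conclusion indirectly, adding the baseline energy $d\big((S\circ_{v}^{f_0}T)\circ_{w}^{g_0}U\big)$ to both exponents and invoking its bijection lemma once more to identify the resulting absolute energies.
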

\begin{proof}
\begin{itemize}
\item Nested associativity: show $(S\circ_{v,\lambda} T)\circ_{w,\lambda} U=S\circ_{v,\lambda} (T\circ_{w,\lambda} U)$ where $v$ is a vertex of $S$ and $w$ a vertex of $T$.\\
Let $S,T,U \in \mathcal{O}^{\lambda}, v$ be a vertex of $S$ and $w$ be a vertex of
 $T$ such that $\left|T\right|=\left|v\right|$ and
$\left|U\right|=\left|w\right|$. We have:
\begin{eqnarray*}
(S\circ_{v,\lambda} T)\circ_{w,\lambda} U&=&\sum_{f:E(S,v)\rightarrow v(T)}{\lambda^{d(S\circ_{v}^{f}T)-d(S\circ_{v}^{f_0}T)}(S\circ_{v}^{f}T)\circ_{w,\lambda} U}\\
&\hskip -20mm=&\hskip -10mm \sum_{f:E(S,v)\rightarrow v(T)}{\sum_{g:E(S\circ_{v}^{f}T, w)\rightarrow v(U)}{\lambda^{d(S\circ_{v}^{f}T)-d(S\circ_{v}^{f_0}T)+d((S\circ_{v}^{f}T)\circ_{w}^{g}U)-d((S\circ_{v}^{f}T)\circ_{w}^{g_0 }U)}(S\circ_{v}^{f}T)\circ_{w}^{g}U}}\\
&=&\sum_{f:E(S,v)\rightarrow v(T)}{\sum_{g:E(S\circ_{v}^{f}T,w)\rightarrow v(U)}{\lambda^{A}(S\circ_{v}^{f}T)\circ_{w}^{g}U}},
\end{eqnarray*}
where $A=d(S\circ_{v}^{f}T)-d(S\circ_{v}^{f_0}T)+d\big((S\circ_{v}^{f}T)\circ_{w}^{g}U\big)-d\big((S\circ_{v}^{f}T)\circ_{w}^{g_0}U\big).$ Similarly we have:
\begin{eqnarray*}
S\circ_{v,\lambda}(T\circ_{w,\lambda}U)&=&\sum_{\tilde{g}:E(T,w):\rightarrow v(U)}{\lambda^{d(T\circ_{w}^{\tilde{g}}U)-d(T\circ_{w}^{\tilde{g_0}}U)}S\circ_{v,\lambda}(T\circ_{w}^{\tilde{g}}U)}\\
&\hskip -30mm=&\hskip -15mm \sum_{\tilde{f}:E(S,v)\rightarrow v(T\circ_{w}^{\tilde{g}}U}{\sum_{\tilde{g}:E(T,w)\rightarrow v(U)}{\lambda^{d(T\circ_{w}^{\tilde{g}}U)-d(T\circ_{w}^{\tilde{g_0}}U)+d\big(S\circ_{v}^{\tilde{f}}(T\circ_{w}^{\tilde{g}}U)\big)-d\big(S\circ_{v}^{\tilde{f}_{0}}(T\circ_{w}^{\tilde{g}}U)\big)}S\circ_{v}^{\tilde{f}}(T\circ_{w}^{\tilde{g}}U)}}\\
&=&\sum_{\tilde{g}:E(T,w)\rightarrow v(U)}{\sum_{\tilde{f}:E(S,v)\rightarrow v(T\circ_{w}^{\tilde{g}}U)}{\lambda^{B}}S\circ_{v}^{\tilde{f}}(T\circ_{w}^{\tilde{g}}U)}.
\end{eqnarray*}
In order to show $(S\circ_{v,\lambda} T)\circ_{w,\lambda} U=S\circ_{v,\lambda} (T\circ_{w,\lambda} U)$, we have to prove the following lemma:
\begin{lem}\label{lemme emboite}
there is a natural bijection $(f,g)\longmapsto (\tilde{f},\tilde{g})$ such that
\begin{equation}\label{equation emboite}
(S\circ_{v}^{f}T)\circ_{w}^{g}U=S\circ_{v}^{\tilde{f}}(T\circ_{w}^{\tilde{g}}U).
\end{equation}
\end{lem}
\begin{proof}
Let $v$ be a vertex of $S$ and $w$ be a vertex of $T$ such that $\left|T\right|=\left|v\right|$ and $\left|U\right|=\left|w\right|$. Let $f:E(S,v)\rightarrow v(T)$ and $g:E(S\circ_{v}^{f}T,w)\rightarrow v(U)$ us choose two applications. We look for $\tilde{g}:E(T,w)\rightarrow v(U)$ and $\tilde{f}:E(S,v)\rightarrow v(T\circ_{w}^{\tilde{g}}U)=v(T)\cup v(U)\backslash\{w\}$ such that the equation \eqref{equation emboite} is checked.\\
Let $e$ be an edge of $T$ arriving at $w$, thus $e$ is an edge of $S\circ_v T$ arriving at $w$. We set $\tilde{g}(e)=g(e)$. Similarly we define $\tilde{f}$ in a unique way:
$$\begin{array}{ccccl}
\tilde{f}&:&E(S,v)&\longrightarrow &V(T\circ_{w}^{\tilde{g}}U)=v(T)\cup v(T)\backslash\{w\}\\
         & & e    &\longmapsto     &\tilde{f}(e)=\left\{\begin{array}{ccc}f(e)& if &f(e)\neq w\\g(e)&if& f(e)=w \end{array}\right.
\end{array}$$ 
Conversely, we assume that we have the pair $(\tilde{f},\tilde{g})$ and look for the pair $(f,g)$ such that equation \eqref{equation emboite} is verified. We have $\tilde{f}:E(S,v)\rightarrow v(T)\cup v(U)\backslash\{w\}$ and $\tilde{g}:E(T,w)\rightarrow v(U).$ We then define:
$$\begin{array}{ccccl}
f&:&E(S,v)&\longrightarrow &v(T)\\
 & & e    & \longmapsto    &f(e)=\left\{\begin{array}{ccc}\tilde{f}(e)&if&\tilde{f}(e)\notin v(U)\\ w&if& \tilde{f}(e)\in v(U)\end{array}\right.
\end{array}$$
\end{proof}
To show $(S\circ_{v,\lambda} T)\circ_{w,\lambda} U=S\circ_{v,\lambda} (T\circ_{w,\lambda} U)$, it remains to show the equality $A=B$. We set $A'=A+d((S\circ_{v}^{f_0}T)\circ_{w}^{g_0}U)$,~~ $B'=B+d(S\circ_{v}^{\tilde{f}_{0}}(T\circ_{w}^{\tilde{g}_{0}}U))$ and $\epsilon(f)=d(S\circ_{v}^{f}T)-d(S\circ_{v}^{f_0}T)$. We have:
\begin{equation}
\epsilon(f)=\sum_{e\in E(S,v)}{h\big(f(e)\big).\left|B_e\right|},
\end{equation}
where $h\big(f(e)\big)$ is the distance between $f(e)$ and the root of $T$ in the new tree $S\circ_{v}^{f}T$ and $\left|B_e\right|$ is the weight of the branch to $e$. Similarly:
\begin{eqnarray*}
d\big((S\circ_{v}^{f}T)\circ_{w}^{g_0}U\big)-d\big((S\circ_{v}^{f_0}T)\circ_{w}^{g_0}U\big)&=&\sum_{e\in E(S,v)}{h\big(f(e)\big)\left|B_e\right|}\\
&=&\epsilon(f),
\end{eqnarray*}
Here $g_0$ is not involved because it was connected to the root of $U$, so $A'=d\big((S\circ_{v}^{f}T)\circ_{w}^{g}U\big)$. By the same calculation is shown $B'=d\big(S\circ_{v}^{\tilde{f}}(T\circ_{w}^{\tilde{g}}U)\big)$. So by Lemma \ref{lemme emboite} we have  $A'=B'$ that is to say $A+d\big((S\circ_{v}^{f_0}T)\circ_{w}^{g_0}U\big)=B+d\big(S\circ_{v}^{\tilde{f}_0}(T\circ_{w}^{\tilde{g}_0}U)\big)$, which proves that $A=B$ because by Lemma \ref{lemme emboite} we have $d\big((S\circ_{v}^{f_0}T)\circ_{w}^{g_0}U\big)=d\big(S\circ_{v}^{\tilde{f}_0}(T\circ_{w}^{\tilde{g}_0}U)\big).$
\item  Disjoint associativity: let $v,w$ be two disjoint vertices of $S$ such that $\left|v\right|=\left|T\right|$ and $\left|w\right|=\left|U\right|$, show that:
\begin{equation}\label{equation disjointe}
(S\circ_{v,\lambda} T)\circ_{w,\lambda} U=(S\circ_{w,\lambda} U)\circ_{v,\lambda} T.
\end{equation}
We have
\begin{eqnarray*}
(S\circ_{v,\lambda} T)\circ_{w,\lambda} U&=&\sum_{f:E(S,v)\rightarrow v(T)}{\lambda^{d(S\circ_{v}^{f}T)-d(S\circ_{v}^{f_0}T)}(S\circ_{v}^{f}T)\circ_{w,\lambda} U}\\
&=&\sum_{f:E(S,v)\rightarrow v(T)}{\sum_{g:E(S\circ_{v}^{f}T,w)\rightarrow v(U)}{\lambda^{k(f)+d\big((S\circ_{v}^{f}T)\circ_{w}^{g}U\big)-d\big((S\circ_{v}^{f}T)\circ_{w}^{g_0}U\big)}}(S\circ_{v}^{f}T)\circ_{w}^{g}U}\\
&=& \sum_{f:E(S,v)\rightarrow v(T)}{\sum_{g:E(S\circ_{v}^{f}T,w)\rightarrow v(U)}{\lambda^{C}}(S\circ_{v}^{f}T)\circ_{w}^{g}U},
\end{eqnarray*}
where $k(f)=d(S\circ_{v}^{f}T)-d(S\circ_{v}^{f_0}T)$ and
$C=k(f)+d\big((s\circ_{v}^{f}T)\circ_{w}^{g}U\big)-d\big((S\circ_{v}^{f}T)\circ_{w}^{g_0}U\big)$.
Similarly we find:
$$(S\circ_{w,\lambda} U)\circ_{v,\lambda}T=\sum_{\tilde{g}:E(S,w)\rightarrow v(U)}{\sum_{\tilde{f}:E(S\circ_{w}^{\tilde{g}}U,v)\rightarrow v(T)}{\lambda^{D}}(S\circ_{w}^{\tilde{g}}U)\circ_{v}^{\tilde{f}}T},$$
where $D=d(S\circ_{w}^{\tilde{g}}U)-d(S\circ_{w}^{\tilde{g}_0}U)+d\big((S\circ_{w}^{\tilde{g}}U)\circ_{v}^{\tilde{f}}T\big)-d\big((S\circ_{w}^{\tilde{g}}U)\circ_{v}^{\tilde{f}_{0}}T\big)$. We set $k(\tilde{g})=d(S\circ_{w}^{\tilde{g}}U)-d(S\circ_{w}^{\tilde{g}_0}U)$. To prove \eqref{equation disjointe}, we introduce the following lemma:
\begin{lem}\label{lemme disjointe}
We have a natural bijection $(f,g)\longmapsto (\tilde{f},\tilde{g})$ such that
\begin{equation}\label{equa}
(S\circ_{v}^{f}T)\circ_{w}^{g}U=(S\circ_{w}^{\tilde{g}}U)\circ_{v}^{\tilde{f}}T.
\end{equation} 
\end{lem}
\begin{proof}
Let $f:E(S,v)\rightarrow v(T)$ and $g:E(S\circ_{v}^{f}T,w)\rightarrow v(U)$ are given maps.  We look for $\tilde{g}:E(S,w)\rightarrow v(U)$ and $\tilde{f}:E(S\circ_{w}^{\tilde{g}}U,v)\rightarrow v(T)$ such that the Equation  \eqref{equa} is verified. Let  $\tilde{g}$ be the restriction of $g$ on the edges $e$ from $S$ and $\tilde{f}=f$ because $E(S\circ_{w}^{g}U,v)=E(S,v)$ because the vertices $v$ and $w$ are disjoint.
\end{proof}
Thus to show  disjoint associativity, it remains to show that for any pair $(f,g)$ and $(\tilde{f},\tilde{g})$ we have $C=D$. We set
$C'=C+d\big( (S\circ_{v}^{f_0}T)\circ_{w}^{g_0}U\big)$ and $D'=D+d\big((S\circ_{w}^{\tilde{g}_0}U)\circ_{v}^{\tilde{f}_0}T\big)$. We have:
$$k(f)=\sum_{e\in E(S,v)}{h\big(f(e)\big)\left|B_e\right|},$$
Similarly we have:
$$d\big((S\circ_{v}^{f}T)\circ_{w}^{g_0}U\big)- d\big((S\circ_{v}^{f_0}T)\circ_{w}^{g_0}U\big)=k(f),$$
because we changed the vertex $w$ by a tree of the same weight, and $g_0$ were grafted onto the root, which proves $C'=d\big((S\circ_{v}^{f}T)\circ_{w}^{g}U\big)$. So $D'=d\big((S\circ_{w}^{\tilde{g}}U)\circ_{v}^{\tilde{f}}T\big)$. By Lemma \ref{lemme disjointe} again, $C'=D'$ and  $C=D$, which proves disjoint associativity.
\end{itemize}
 The partial compositions defined on  $\mathcal{O}^{\lambda}$ hence verify the axioms of an multigraded operad.
\end{proof}
\begin{rmk}
The multigraded operad $\mathcal{O}^{\lambda}$ modulo the forgetting of the graduation
can be identified to:\\
- The pre-Lie operad \cite{ChaLiv} if $\lambda=1$.\\
- The NAP-operad if $\lambda=0$.
\end{rmk}
Let $\mathcal{O}=\bigoplus_{n\geq0}\mathcal{O}_n$ the multigraded operad. We have $\mathcal{O}_n =\prod_{i_j \geq1}\mathcal{O}_{n,i_1 ,\ldots,i_n}.$ for $k\in\mathbb{N}$. We have:
$$\mathcal{O}_{n}^{\lambda}=\{(t,\underline{\mu}),~~ \text{where t is a labeled tree with n vertices and}~~\underline{\mu}=(\mu_1 ,\ldots,\mu_n)\in{\mathbb{N}*}^{n}\},$$
where $\underline{\mu}$ encodes the weight of the vertices. We denote by
$\mathcal{O}_{PL}$ the pre-Lie operad of  labeled rooted trees   \cite{ChaLiv}. We set:
$$\begin{array}{ccccl}
i&:&\mathcal{O}_{PL}&\longrightarrow&\mathcal{O}^1\\
 & & t              &\longmapsto    & \sum_{\underline{\mu}\in\mathbb{N}^{nb(t)}}{(t,\underline{\mu})},
\end{array}$$ 
where $nb(t)$ denoted the number of vertices of the tree $t$. We denote by $i_n$ the restriction of $i$ to $\mathcal{O}_{PL,n}$. It is clear that for all $n\in\mathbb{N}^*$,
$$\begin{array}{ccccl}
i_n&:&\mathcal{O}_{PL,n}&\longrightarrow&\mathcal{O}_{n}^{1}\\
 & & t                &\longmapsto    &\sum_{\underline{\mu}\in\mathbb{N}^{n}}{(t,\underline{\mu})}.
\end{array}$$
\begin{thm}
$i$ is a morphism of operads.
\end{thm}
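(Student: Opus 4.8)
The plan is to verify the two defining conditions of an operad morphism (in the sense of the Definition recalled earlier): preservation of the unit and compatibility with partial composition. Since $i$ is linear and each composition $\circ_{v,\lambda}$ is bilinear, it suffices to check both conditions on the basis of labeled trees. The unit of $\mathcal{O}_{PL}$ is the single-vertex tree, which $i$ sends to $\sum_{n\geq1}\racinen$; this is exactly the unit $\un$ of $\mathcal{O}^1$, so the first condition holds.

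For compatibility with composition, fix labeled trees $t$ with $n$ vertices and $s$ with $m$ vertices, together with a vertex $v$ of $t$. Expanding by bilinearity,
\[
i(t)\circ_{v,1}i(s)=\sum_{\underline{\mu}}\sum_{\underline{\nu}}(t,\underline{\mu})\circ_{v,1}(s,\underline{\nu}),
\]
where $\underline{\mu}$ and $\underline{\nu}$ range over all positive weight vectors (in $(\mathbb{N}^*)^n$ and $(\mathbb{N}^*)^m$) on the vertices of $t$ and $s$. By the definition of $\circ_{v,\lambda}$, a summand vanishes unless $\sum_{j=1}^{m}\nu_j=\mu_v$, the weight carried by $v$; and since $\lambda=1$ turns every factor $\lambda^{d(\cdot)-d(\cdot)}$ into $1$, a surviving summand equals $\sum_{f:E(t,v)\to v(s)}(t\circ_v^f s,\underline{\rho})$, where $\underline{\rho}$ assigns $\mu_j$ to the vertices coming from $t$ (other than $v$) and $\nu_j$ to those coming from $s$. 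On the other side, using the pre-Lie composition and then applying $i$,
\[
i(t\circ_v s)=\sum_{f:E(t,v)\to v(s)}\ \sum_{\underline{\rho}}(t\circ_v^f s,\underline{\rho}),
\]
where now $\underline{\rho}$ ranges freely over all positive weight vectors on the $n+m-1$ vertices of $t\circ_v^f s$.

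The heart of the argument is a term-by-term bijection between these two expansions. The grafting maps $f$ already correspond, since the target $[m]$ of the pre-Lie sum is identified with $v(s)$; so for each fixed $f$ I would exhibit a bijection between the constrained pairs $(\underline{\mu},\underline{\nu})$ satisfying $\mu_v=\sum_j\nu_j$ and the unconstrained weight vectors $\underline{\rho}$ on $t\circ_v^f s$. In one direction, a pair $(\underline{\mu},\underline{\nu})$ determines $\underline{\rho}$ by transporting the weights of the surviving vertices, the lost coordinate $\mu_v$ carrying no information because the constraint pins it to $\sum_j\nu_j$; in the other, $\underline{\rho}$ recovers $\underline{\nu}$ and every coordinate of $\underline{\mu}$ except $\mu_v$, which one then sets equal to $\sum_j\nu_j$, automatically a positive integer meeting the constraint. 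Under this correspondence the weighted trees $(t\circ_v^f s,\underline{\rho})$ on the two sides coincide, yielding $i(t\circ_v s)=i(t)\circ_{v,1}i(s)$.

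The only genuine obstacle is the bookkeeping in this bijection: one must see that the weight condition $\left|T\right|=\left|v\right|$ built into $\circ_{v,\lambda}$ exactly absorbs the coordinate $\mu_v$ that disappears when the vertex $v$ is replaced by $s$, so that the constrained double sum over $(\underline{\mu},\underline{\nu})$ reproduces the free single sum over $\underline{\rho}$ with neither repetition nor omission. Once this is established, together with the unit check, $i$ is a morphism of operads.
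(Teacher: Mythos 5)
Your proof is correct and takes essentially the same approach as the paper: for each fixed grafting map $f$ you set up the bijection between constrained weight pairs $(\underline{\mu},\underline{\nu})$ satisfying $\mu_v=\sum_j \nu_j$ and free weight vectors $\underline{\rho}$ on $t\circ_v^f s$, which is exactly the paper's correspondence between $(\underline{\alpha},\underline{\beta})$ and $\underline{\gamma}$, followed by summing over $f$. The only difference is your explicit verification of the unit condition, which the paper leaves implicit.
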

\begin{proof}
Let $S$ be a tree with $n$ labeled vertices and  $T$  be a tree with $m$ labeled vertices. Let $v$ be a vertex of $S$. Show:
\begin{equation}\label{morphismeoper}
i(S\circ_v T)=i(S)\circ_{v,1}i(T).
\end{equation}
Proving Equation  \eqref{morphismeoper} is equivalent to show  
\begin{eqnarray*}
\sum_{\underline{\gamma}\in{\mathbb{N}^*}^{n+m-1}}{(S\circ_v T,\underline{\gamma})}&=&\sum_{\underline{\alpha}\in{\mathbb{N}^*}^n}{(S,\underline{\alpha})}\circ_{v,1}\sum_{\underline{\beta}\in{\mathbb{N}^*}^m}{(T,\underline{\beta})}\\
&=&\sum_{\underline{\alpha}\in{\mathbb{N}^*}^n}{(S,\underline{\alpha})}\circ_{v,1}\sum_{\underline{\beta}\in{\mathbb{N}^*}^m /~\left|\beta\right|=\left|v\right|}{(T,\underline{\beta})}.
\end{eqnarray*}
Let $f:E(S,v)\longrightarrow v(T)$ and
$\underline{\gamma}=(\gamma_1,\ldots,\gamma_{n+m-1})\in{\mathbb{N}^{*}}^{n+m-1},$
 thus we can identify $\underline{\gamma}$ to
 \begin{equation}
\{\gamma_w,w~\text{vertex of}~~S\circ_{v}^{f}T\}.
\end{equation}
 As we have:
\begin{equation}
v(S\circ_{v}^{f}T)=v(S)\backslash\{v\}\cup v(T),
\end{equation}
 we set:
\begin{eqnarray*}
\underline{\beta}&=&\{\beta_w=\gamma_w ,~~~w\in v(T)\},\\
 and&&\\
\underline{\alpha}&=&\{\alpha_w =\gamma_w ,~~~~w\in
v(S)\backslash\{v\},\alpha_v =\sum_{w\in v(T)}{\beta_w}\}.
\end{eqnarray*}
Thus for each choice of $f$ and $\underline{\gamma}$, there exist
$\underline{\alpha}$ and $\underline{\beta}$ unique such that: 
\begin{equation}
(S\circ_{v}^{f}T,\underline{\gamma})=(S,\underline{\alpha})\circ_{v,1}^{f}(T,\underline{\beta}).
\end{equation}
Thus 
\begin{equation}
\sum_{\underline{\gamma}}{(S\circ_{v}^{f}T,\underline{\gamma})}=\sum_{\underline{\alpha},\underline{\beta}/\left|\alpha_v\right|
=\left|\underline{\beta}\right|}{(S,\underline{\alpha})\circ_{v,1}^{f}(T,\underline{\beta})}.
\end{equation}
As a result 
\begin{eqnarray*}
i(S\circ_{v}^{f}T)&=&\sum_{\underline{\alpha}\in{\mathbb{N}^*}^{n}}{(S,\underline{\alpha})}\circ_{v,1}^{f}\sum_{\underline{\beta}\in{\mathbb{N}^*}^{m}}{(T,\underline{\beta})}\\
&=& i(S)\circ_{v,1}^{f}i(T)
\end{eqnarray*}
Then summing over different possible connections $f$, we find:
\begin{equation}
i(S\circ_v T)=i(S)\circ_{v,1}i(T),
\end{equation}
which proves the theorem.
\end{proof}
If $\circ_{NAP}$ denoted the partial composition for the $NAP$ operad, we similarly prove that: 
$$\begin{array}{ccccl}
j&:&\mathcal{O}_{NAP}&\longrightarrow &\mathcal{O}^{0}\\
 & &t               &\longmapsto     &\sum_{\underline{\mu}\in{\mathbb{N}^{*}}^{nb(t)}}{(t,\underline{\mu})}
\end{array}$$
 is an morphism of operads i.e.
\begin{equation}
j(S\circ_{NAP,v}T)=J(S)\circ_{0,v}j(T),
\end{equation}
for any pair of trees $S,T$ and any vertex $v$ of $S$.

\section{Deformed NAP algebra}

\begin{defn}
Let $\lambda\in K$. We define the grafting operator
"$\leftarrow_{\lambda}"$, for any $S,T\in \mathcal{O}^{\lambda}$ by:
\begin{equation}
T \leftarrow_{\lambda} S=\big(\arbabst\circ_{v,\lambda}T\big)\circ_{w,\lambda}S.
\end{equation}
It is clear then that:
\begin{equation}
T \leftarrow_{\lambda} S=\sum_{v\in v(T)}{\lambda^{\left|S\right|h(v)}T\leftarrow_v S},
\end{equation}
where $T\leftarrow_v S$ is the tree obtained by grafting $S$ on the vertex $v$ of $T$, $h(v)$ is the height of $v$ in $T$ and $\left|S\right|$ is the weight of $S$.
\end{defn}
\begin{thm}\label{theoreme deforme}
For any $S,T,U \in \mathcal{O}^{\lambda}$, we have:
\begin{equation}
(U\leftarrow_{\lambda}T)\leftarrow_{\lambda} S-\lambda^{\left|S\right|}U\leftarrow_{\lambda}(T\leftarrow_{\lambda}S)=(U\leftarrow_{\lambda}S)\leftarrow_{\lambda}T-\lambda^{\left|T\right|}U\leftarrow_{\lambda}(S\leftarrow_{\lambda}T).
\end{equation}
\end{thm}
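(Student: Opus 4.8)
The plan is to work entirely with the explicit formula
$$T \leftarrow_\lambda S=\sum_{v\in v(T)}\lambda^{|S|h(v)}\,T\leftarrow_v S$$
and to reduce the deformed relation to a purely combinatorial symmetry of grafting two trees independently onto the vertices of $U$. First I would expand the leading term on each side. Grafting $T$ onto $U$ yields, for each $v\in v(U)$, the tree $U\leftarrow_v T$; grafting $S$ onto it then splits into two cases according to whether the receiving vertex lies in the original copy of $U$ or in the inserted copy of $T$. The essential observation is the behaviour of heights: the vertices of $U$ keep their heights in $U\leftarrow_v T$, while a vertex $t\in v(T)$ acquires height $h_U(v)+1+h_T(t)$, since one reaches it by descending to $v$, crossing the new edge, and then following $T$.

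Next I would isolate the ``$T$-copy'' terms, in which $S$ is grafted at a vertex $t$ of the inserted $T$. Using weight additivity $|T\leftarrow_\lambda S|=|T|+|S|$ together with the height formula above, the accumulated exponent of $\lambda$ for such a term is $|T|h_U(v)+|S|\bigl(h_U(v)+1+h_T(t)\bigr)$, whereas the matching term in $U\leftarrow_\lambda(T\leftarrow_\lambda S)$ carries exponent $|S|h_T(t)+(|T|+|S|)h_U(v)$. The difference is exactly $|S|$, contributed by the single new edge below the root of $T$. Since the underlying trees coincide—grafting $S$ at $t$ inside the already-inserted $T$ produces the same tree as first forming $T\leftarrow_t S$ and then inserting it at $v$—this shows that the ``$T$-copy'' terms sum precisely to $\lambda^{|S|}\,U\leftarrow_\lambda(T\leftarrow_\lambda S)$, which is why that term is subtracted on the left-hand side.

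Consequently the left-hand side of the theorem reduces to the remaining ``$U$-case'' terms,
$$\sum_{v,x\in v(U)}\lambda^{|T|h_U(v)+|S|h_U(x)}\,(U\leftarrow_v T)\leftarrow_x S,$$
in which both $T$ and $S$ are grafted onto vertices of $U$. Running the identical computation with the roles of $S$ and $T$ exchanged shows that the right-hand side reduces to $\sum_{v,x}\lambda^{|S|h_U(v)+|T|h_U(x)}\,(U\leftarrow_v S)\leftarrow_x T$. I would then relabel $v\leftrightarrow x$ in one of the two sums: the exponents match term by term, and the trees agree because grafting $T$ at one vertex and $S$ at another commutes. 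In the diagonal case $v=x$ the two graftings still commute, as the children of a vertex are unordered, so $(U\leftarrow_v T)\leftarrow_v S=(U\leftarrow_v S)\leftarrow_v T$; this settles the equality of the two $U$-case sums and hence the theorem.

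The step I expect to be the main obstacle is the exponent bookkeeping of the ``$T$-copy'' case: one must verify scrupulously that, when passing between $(U\leftarrow_v T)\leftarrow_t S$ and $U\leftarrow_v(T\leftarrow_t S)$, the only change in total potential energy is carried by the single edge joining $T$ to $v$. It is precisely this edge that produces the clean factor $\lambda^{|S|}$ and accounts for the asymmetric shape of the deformed relation.
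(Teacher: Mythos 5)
Your proposal is correct and follows essentially the same route as the paper: expand $(U\leftarrow_{\lambda}T)\leftarrow_{\lambda}S$, split the grafting vertex according to whether it lies in $U$ or in the inserted copy of $T$, observe that the $T$-copy terms carry exactly one extra factor $\lambda^{|S|}$ (from the height shift by the single new edge) and so cancel against $\lambda^{|S|}U\leftarrow_{\lambda}(T\leftarrow_{\lambda}S)$, leaving the double sum $\sum_{v,w\in v(U)}\lambda^{|S|h(v)+|T|h(w)}(U\leftarrow_{w}T)\leftarrow_{v}S$, which is symmetric under exchanging $S$ and $T$. Your explicit treatment of the diagonal case $v=w$ and the careful exponent bookkeeping are exactly the (implicit) content of the paper's Equation~\eqref{equationlambda}.
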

\begin{proof}
We have:
\begin{eqnarray*}
(U\leftarrow_{\lambda}T)\leftarrow_{\lambda}S&=&\sum_{v\in
  v(U\leftarrow_{\lambda}T)}{\sum_{w\in
    v(U)}{\lambda^{\left|S\right|h(v)+\left|T\right|h(w)}}~~(U\leftarrow_w T)\leftarrow_v S}\\
&=&\sum_{w\in v(U)}{\sum_{v\in v(T\rightarrow_w
    U)}{\lambda^{\left|S\right|h(v)+\left|T\right|h(w)}}~~(U\leftarrow_w
  T)\leftarrow_v S)}\\
&=&\sum_{w\in v(U)}{\sum_{v\in v(U) }{\lambda^{\left|S\right|h(v)+\left|T\right|h(w)}}~~(U\leftarrow_wT)\leftarrow_v S}\\
& &+\sum_{w\in v(U)}{\sum_{v\in v(T)
  }{\lambda^{\left|S\right|(h(v)+h(w)+1)+\left|T\right|h(w)}}~~U\leftarrow_w
  (T\leftarrow_v S)}.
\end{eqnarray*}
Similarly:
\begin{eqnarray*}
U\leftarrow_{\lambda}(T\leftarrow_{\lambda}S)&=&\sum_{v\in v(T)}{\sum_{w\in
    v(U)}{\lambda^{\left|S\right|h(v)+\left|T\leftarrow_v
        S\right|h(w)}}~~~U\leftarrow_w (T\leftarrow_v S)}\\
&=&\sum_{v\in v(T)}{\sum_{w\in
    v(U)}{\lambda^{\left |S\right |(h(v)+h(w))+\left |T\right |h(w)}}}~~~U\leftarrow_w (T\leftarrow_v S).
\end{eqnarray*}
Thus:
\begin{equation}\label{equationlambda}
(U\leftarrow_{\lambda}T)\leftarrow_{\lambda}S-\lambda^{\left|S\right|}U\leftarrow_{\lambda}(T\leftarrow_{\lambda}S)=\sum_{v,w\in
  v(U)}{\lambda^{\left|S\right|h(v)+\left|T\right|h(w)}(U\leftarrow_{w}T)\leftarrow_v S}.
\end{equation}
In Equation \eqref{equationlambda}, there is a symmetry in the terms $S$ and $T$ because $w,v$ are two vertices of $U$. As a result we prove Theorem \ref{theoreme deforme}.
\end{proof}

\subsection{Definitions for  multigraded operads} 

\begin{defn}
Let $\mathcal{P}$ and $\mathcal{Q}$ two multigraded operads. A
morphism of multigraded operads  is a morphism of operads  $a$ , wich moreover verifies:
\begin{equation}
a(n):\mathcal{P}_{n,k_1,\ldots,k_n}\longrightarrow\mathcal{Q}_{n,k_1,\ldots,k_n},
\end{equation}
for any $n\in\mathbb{N}^*$ and $k_1 ,\ldots,k_n \in\mathbb{N}^*$.

\end{defn}
\begin{defn}
Let $\mathcal{P}$ be a multigraded operad and $V=\bigoplus_{n>0}V_n$ be a graded vector space. We say that $V$ is a  graded $\mathcal{P}$-algebra, if there is a morphism of multigraded operads\\
$a:\mathcal{P}\rightarrow Endop(V).$ Let $(V,a)$ and $(W,b)$ be two  graded $\mathcal{P}$-algebras. Let
$\phi:(V,a)\rightarrow (W,b)$ be a linear homogenous map of degree
zero. $\phi$ is called morphism of graded $\mathcal{P}$-algebras if
for any $\mu\in\mathcal{P}(n),~~v_1,\ldots,v_n \in~V,$
\begin{equation*}
\phi\big(\mu(v_1,\ldots,v_n)\big)=\mu\big(\phi(v_1),\ldots,\phi(v_n)\big).
\end{equation*}
We define the   free  graded $\mathcal{P}$-algebra 
$\mathcal{F}_{\mathcal{P}}(V)$ on $V$ by the  following universal property: there exists $i:V\rightarrow \mathcal{F}_{\mathcal{P}}(V)$ such that
for any   graded $\mathcal{P}$-algebra $A$ and any linear map of degree zero $\varphi: V\rightarrow A$ there exists an
unique morphism of  graded $\mathcal{P}$-algebras
$\tilde{\varphi}:\mathcal{F}_{\mathcal{P}}(V)\rightarrow A$ such that
\begin{equation*}
\tilde{\varphi}\circ i=\varphi.
\end{equation*}
\end{defn}
We set $\mathcal{F}=\bigoplus_{n>0}\left((\prod_{k_1,\ldots,k_n
  >0}\mathcal{P}_{n,k_1,\ldots,k_n})\bigotimes V_{k_1}\otimes\ldots\otimes
V_{k_n}\right)/S_n$. By the same work as in section $1$, we show that
$\mathcal{F}$ is  the free graded $\mathcal{P}$-algebra over $V$.\\
{\bf{Special case:}} if for any $n\in\mathbb{N}^*,~~\dim V_n =1$ then:
\begin{eqnarray*}
\mathcal{F}_{\mathcal{P}}(V)&=&\bigoplus_{n>0}\prod_{k_1 ,\ldots,k_n
  >0}\mathcal{P}_{n,k_1,\ldots,k_n}/S_n\\
&=&\bigoplus_{n>0}\mathcal{P}_n /S_n
\end{eqnarray*}
\begin{rmk}
The deformed pre-Lie algebra $(\mathcal{F}_{\mathcal{O}^{\lambda}},\leftarrow_{\lambda} )$ is:
\begin{itemize}
\item  A right pre-Lie algebra if $\lambda=1$ \cite{MS}, \cite{AgraGam} \cite{S}.
\item  A right $NAP$ algebra if  $\lambda=0$ \cite{Liv}.
\end{itemize}
More precisely the morphisms of operads $i$ and $j$ defined above pass to the quotient by the action of the symmetric group. So we have a morphism of pre-Lie algebras\\
$\bar{i}:\mathcal{F}_{PL}\rightarrow\mathcal{F}_{\mathcal{O}^{1}}$ and a morphism of NAP algebras
$\bar{j}:\mathcal{F}_{NAP}\rightarrow\mathcal{F}_{\mathcal{O}^{0}}$. In addition 
$\bar{i}$  is the unique morphism of pre-Lie algebras such that
$\bar{i}(\racine)=\sum_{n>0}{\racinen}$. Similarly $\bar{j}$ is the unique morphism of NAP algebras such that $\bar{j}(\racine)=\sum_{n>0}{\racinen}$. 
\end{rmk}

\subsection{The multigraded operad $ \mathcal{PL}_{\lambda}$}

A pre-Lie algebra is an algebra over some binary quadratic operad. Similarly a deformed pre-Lie algebra in the above sense is a graded algebra over an multigraded operad  $\mathcal{PL}_{\lambda}$: Let $\mathcal{F}$ be the free multigraded operad spanned by the space
$\mathcal{F}(2)=\prod_{k,l>0}\mathcal{F}_{2,k,l}$. A basis of 
$\mathcal{F}(n)$ is a vector space endowed with a product and parentheses on $n$ variables indexed by $k_1 ,\ldots,k_n$. For example
a basis of $\mathcal{F}(2)$ is given by $(x_k y_l)$ and $(y_l x_k )$ for any $k,l>0$. A basis of $\mathcal{F}(3)$ is $\big( (x_k y_l )z_m\big)
,~~\big( x_k (y_l z_m)\big)$ and their permutations. Let $ r$ be the sub-module
of $\mathcal{F}(3)$ spanned by:
$$\big( (x_k y_l )z_m \big)-\lambda^{m}\big( x_k (y_l z_m)\big)-\big( (x_k
z_m)y_l \big) +\lambda^{l}\big( x_k (z_m y_l )\big).$$
Let $(R)$ be the ideal spanned by $r$. We denote
$\mathcal{PL}_{\lambda}=\mathcal{F}/(R)$. The following theorem
is the analogue of Theorem 1.9 of F. Chapoton and M. Livernet \cite{ChaLiv}.
\begin{thm}
The operad $\mathcal{PL}_{\lambda}$ is isomorphic to the multigraded operad
$\mathcal{O}^{\lambda}$ defined on the rooted trees.
\end{thm}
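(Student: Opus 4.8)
The plan is to realise $\mathcal{PL}_{\lambda}$ inside the tree operad by an explicit morphism and then to check bijectivity component by component. First I would use the freeness of $\mathcal{F}$: a morphism of multigraded operads out of $\mathcal{F}$ is determined by the image of the binary generator, so I set
$$\Phi:\mathcal{F}\longrightarrow\mathcal{O}^{\lambda},\qquad \Phi(x_k y_l)=\text{the two--vertex tree with root of weight }k\text{ and upper vertex of weight }l.$$
This amounts to declaring that the abstract product is sent to the grafting operator $\leftarrow_{\lambda}$ of the previous subsection, so that a bracketed monomial in $\mathcal{F}(n)$ is sent to the corresponding iterated $\leftarrow_{\lambda}$--grafting of single vertices in $\mathcal{O}^{\lambda}$.

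Second, I would verify that $\Phi$ annihilates the generator
$$\big( (x_k y_l )z_m \big)-\lambda^{m}\big( x_k (y_l z_m)\big)-\big( (x_k z_m)y_l \big) +\lambda^{l}\big( x_k (z_m y_l )\big)$$
of the ideal $(R)$. Evaluating $\Phi$ on this element, with $x,y,z$ in the roles of $U,T,S$ and with $\left|T\right|=l$, $\left|S\right|=m$, produces exactly the two sides of the identity of Theorem \ref{theoreme deforme}; hence $\Phi(r)=0$. Since $\Phi$ is a morphism of operads it kills the whole ideal $(R)$, so it factors through the quotient and yields a morphism of multigraded operads $\bar\Phi:\mathcal{PL}_{\lambda}=\mathcal{F}/(R)\to\mathcal{O}^{\lambda}$.

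Third comes the bijectivity of $\bar\Phi$ on each component $\mathcal{O}^{\lambda}(n)_{k_1,\dots,k_n}$. I would filter $\mathcal{O}^{\lambda}$ by the potential energy $d$: the lowest--order part of $\leftarrow_{\lambda}$ (and of every $\circ_{v,\lambda}$) is the grafting onto the root, which is precisely the NAP structure, so the leading term of $\bar\Phi$ is the NAP morphism $\bar j$ of the previous section. As $\mathcal{O}^{0}$ is the NAP operad, which is binary quadratic with the labelled rooted trees as a basis \cite{Liv}, the map $\bar j$ is onto; by the standard argument for filtered maps this lifts to surjectivity of $\bar\Phi$, and in particular shows that $\mathcal{O}^{\lambda}$ is generated in arity two. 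For the reverse inequality one observes that $\dim\mathcal{O}^{\lambda}(n)_{k_1,\dots,k_n}$ is the number of rooted trees on $n$ labelled vertices, independently of the weights, while the single quadratic relation $r$, used as a rewriting rule, reduces every bracketing of $\mathcal{PL}_{\lambda}(n)_{k_1,\dots,k_n}$ to a fixed family of normal forms (for instance left--combed products) in bijection with those same trees. A surjection between spaces of equal finite dimension is an isomorphism, which proves the theorem; one may equally transport the count from the pre-Lie endpoint $\lambda=1$ via Chapoton--Livernet's Theorem~1.9 \cite{ChaLiv} together with flatness of the family in $\lambda$.

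The delicate point is this last inequality, namely that $r$ really does collapse each component down to the tree count with no further hidden relations. This is exactly the heart of the Chapoton--Livernet argument, and the extra subtlety here is to control the weight bookkeeping carried by the powers of $\lambda$: one must check that the reduction to normal forms is confluent (equivalently, that the deformation $\mathcal{PL}_{\lambda}$ is flat in $\lambda$), so that the normal forms stay linearly independent for every value of $\lambda$ and no degeneration occurs as $\lambda$ specialises to $0$ or $1$.
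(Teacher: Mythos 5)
Your first half coincides with the paper's own proof: by freeness of $\mathcal{F}$ it suffices to prescribe the image of the binary generators, and the generator of $(R)$ is annihilated precisely by Theorem \ref{theoreme deforme}, so the morphism factors through $\mathcal{PL}_{\lambda}=\mathcal{F}/(R)$. The gap lies in the bijectivity step. The paper proves bijectivity by constructing an explicit inverse $\psi_I:\mathcal{O}^{\lambda}(I)\to\mathcal{PL}_{\lambda}(I)$, by induction on the cardinality of $I$ and on the valence $p$ of the root: writing $T=B[x,T_1,\ldots,T_p]$ one sets
\begin{equation*}
\psi_I(T)=\big(\psi_I(B[x,T_2,\ldots,T_p])\,\psi_I(T_1)\big)-\lambda^{\left|T_1\right|}\sum_{j=2}^{p}\psi_I\big(B[x,T_2,\ldots,T_j\star_{\lambda}T_1,\ldots,T_p]\big),
\end{equation*}
and the real work (the computation involving the terms $A_{12}$, $B_{12}^{j}$, $C_{12}^{jk}$, $D_{12}^{j}$) is to verify that this definition is independent of the ordering of the branches, using the relation $r$ and Theorem \ref{theoreme deforme} once more. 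You replace this by (i) surjectivity via the potential-energy filtration and (ii) injectivity via a dimension count resting on a rewriting-to-normal-forms claim; but (ii) is exactly the step you leave unproven, as you yourself concede when you call it ``the delicate point.'' It cannot be deferred: it is equivalent to the well-definedness of $\psi_I$, i.e.\ it is the entire content of the theorem, and neither a confluent rewriting system nor the claimed flatness is exhibited anywhere in your argument.

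Two of your specific suggestions for closing (ii) are moreover untenable. First, left-combed products cannot be the normal forms: in the component with $n$ distinct labels and fixed weights there are only $n!$ left combs, whereas $\dim\mathcal{O}^{\lambda}(n)_{k_1,\ldots,k_n}=n^{n-1}$ (Cayley), already $6<9$ for $n=3$; since your own step (i) makes $\mathcal{PL}_{\lambda}(n)_{k_1,\ldots,k_n}$ surject onto that space, it cannot be spanned by left combs, so $r$ does not rewrite everything into that family and your candidate basis undercounts. Second, transporting the dimension count from $\lambda=1$ via \cite{ChaLiv} ``together with flatness of the family in $\lambda$'' is circular: flatness, i.e.\ constancy of $\dim\mathcal{PL}_{\lambda}(n)_{k_1,\ldots,k_n}$ in $\lambda$, is precisely what is in question, and a specialization of a non-flat family can jump in dimension. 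By contrast, your step (i) is essentially sound and could be made rigorous: since $\left|T\right|=\left|v\right|$ one has $d(S\circ_{v}^{f_0}T)=d(S)+d(T)$, every other term has strictly larger energy, and the leading coefficient is $1$, so the energy filtration is multiplicative and the associated graded operad is the multigraded NAP operad, which is binary by \cite{Liv}; but surjectivity alone does not prove the theorem. To repair the proposal you must either carry out the confluence verification honestly, which amounts to the same branch-permutation computation the paper performs, or construct the inverse $\psi_I$ directly as the paper does.
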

{\bf{Change of notations:}} in the following section, we denote by
``$\star_{\lambda}$''  the product ``$\leftarrow_{\lambda}$''
\begin{proof}
 We adapt the proof of Theorem 1.9 of \cite{ChaLiv}. We define a morphism of operad
$\phi:\mathcal{PL}_{\lambda}\rightarrow \mathcal{O}^{\lambda}$ and we will show that $\phi$  is an isomorphism. Since
$\mathcal{PL}_{\lambda}=\mathcal{F}/(R)$, it suffices to define $\phi$ on
$\mathcal{PL}_{\lambda}(2)=\mathcal{F}(2)$ in $\mathcal{O}^{\lambda}(2)$
and then extend $\phi$ on $\mathcal{F}$ by the universal property of free operad,  and to show that  $\phi$ vanishes on $(R)$. We define $\phi(x_k y_l)=\arbabxy$ and $\phi(y_l x_k
)=\arbabyx$. We check:

\begin{eqnarray*}
 &&\phi( \big( (x_k y_l )z_m \big)-\lambda^{m}\big( x_k (y_l z_m)\big)-\big( (x_k
z_m)y_l \big) +\lambda^{l}\big( x_k (z_m y_l )\big) )\\
&=&\phi(x_k
 y_l)\star_{\lambda}\phi(z_m)-\lambda^{m}\phi(x_k)\star_{\lambda}\phi(y_l z_m )-\phi(x_kz_m )\star_{\lambda}\phi(y_l)+\lambda^{l}\phi(x_k)\star_{\lambda}\phi(z_m y_l )\\
&=&0,
\end{eqnarray*}
hence $\phi(X)=0$ for any $X\in r$. The morphism $\phi$ hence vanishes on  $(R)$.\\
 Let $I$ a finite set formed by labels. We consider  maps:
$$\begin{array}{ccccl}
\underline{k}&:&I&\longrightarrow& \mathbb{N}^*\\
             & &a&\longmapsto&      k_a
\end{array}$$
Working with the species, consider $\phi_I :\mathcal{PL}_{\lambda}(I)\rightarrow \mathcal{O}^{\lambda}(I).$ seek
$\psi_I:\mathcal{O}^{\lambda}(I)\rightarrow \mathcal{PL}_{\lambda}(I)$ such that:
$$\left\{\begin{array}{ccc}\phi_I \circ \psi_I&=&id\\ \psi_I \circ
    \phi_I&=&id\end{array}\right.$$
We have  $\mathcal{PL}_{\lambda}(I)=\prod\mathcal{PL}_{\lambda}(I)_{\underline{k}}$
and
$\mathcal{O}^{\lambda}(I)=\prod\mathcal{O}^{\lambda}(I)_{\underline{k}}$. We look for
$\phi_{I,\underline{k}}:\mathcal{PL}_{\lambda}(I)_{\underline{k}}\rightarrow \mathcal{O}^{\lambda}(I)_{\underline{k}}$.
Show the result by induction on the cardinal of $I$.
\begin{itemize}
\item If $I=\{x\}$, for any $l\in\mathbb{N}^*$ we set  $\psi(\arbxl)=x_l$.
\item Let $n>0,$ we assume that the property is true for any finite set $I$ of cardinality less than or equal to  $n$. Let $I$ be a set of cardinality $n+1$. Let $T$ be a  rooted tree  labeled by $I$ (the weight of vertices are determined by $\underline{k}:I\rightarrow\mathbb{N}^*$).  Let $x$ be the index of the root of  $T$ and $l=\underline{k}(x)$ its weight. Modulo the permutation of the branches,
  $T$ is written in a unique way:
\begin{equation}
T=B[x,T_1 ,\ldots, T_P]=
\end{equation}
where for any $i\in\{1,\ldots,p\},~T_i$ is a tree labeled by a sub-set $J_i$ of $I$. We define the map $\psi_I$ by induction on the valence of the root:
\begin{itemize}
\item If $p=1$, then $T=B[x,T_1]=\arbxl\star_{\lambda} T_1$ hence $\psi_I
  (T)=\big( x_l \psi_I (T_1 )\big)$.
\item If $p>1$. We have:
\begin{eqnarray*}
T&=&B[x,T_1,\ldots,T_p]\\
 &=&B[x,T_2,\ldots,T_p]\star_{\lambda}T_1
 -\lambda^{\left|T_1\right|}\sum_{j=2}^{p}{B[x,T_2,\ldots,T_j \star_{\lambda}T_1,\ldots,T_p]}.
\end{eqnarray*}
Thus 
\begin{equation}
\psi_I (T)=\big( \psi_I ( B[x,T_2,\ldots,T_p])\psi_I
(T_1)\big)-\lambda^{\left| T_1\right|}\sum_{j=2}^{p}{\psi_I \big(
  B[x,T_2,\ldots,T_j \star_{\lambda}T_1 ,\ldots,T_p]\big)}.
\end{equation}
It remains to show that $\psi_I (T)$ does not depend on permutation of the branches $T_i$
for any $i\in\{1,\ldots,p\}$. We have:
\begin{eqnarray*}
T&=&B[x,T_2,\ldots,T_p]\star_{\lambda}T_1
 -\lambda^{\left|T_1\right|}\sum_{j=2}^{p}{B[x,T_2,\ldots,T_j
   \star_{\lambda}T_1,\ldots,T_p]}\\
&=&\left(B[x,T_3,\ldots,T_p]\star_{\lambda}T_2
  -\lambda^{\left|T_2\right|}\sum_{k=3}^{p}{B[x,T_3,\ldots,T_k
    \star_{\lambda}T_2 ,\ldots,T_p]} \right)\star_{\lambda}T_1\\
&&-\lambda^{\left|T_1\right|}B[x,T_2\star_{\lambda}T_1,T_3,\ldots,T_p]-\lambda^{\left|T_1\right|}\sum_{j=3}^{p}{B[x,T_2
  ,\ldots,T_j\star_{\lambda}T_1,\ldots,T_P]}\\
&=&\left(B[x,T_3,\ldots,T_p]\star_{\lambda}T_2 \right)\star_{\lambda }T_1
  -\lambda^{\left|T_2\right|}\sum_{k=3}^{p}{\left( B[x,T_3,\ldots,T_k
    \star_{\lambda}T_2 ,\ldots,T_p]\right)\star_{\lambda}T_1}\\
&&-\lambda^{\left|T_1\right|}\left(B[x,T_3,\ldots,T_p]\star_{\lambda}(T_2
  \star_{\lambda} T_1)+\lambda^{\left|T_2\star_{\lambda}
        T_1\right|}\sum_{j=3}^{p}{B[x,T_3,\ldots,T_j
    \star_{\lambda}(T_2\star_{\lambda}T_1),\ldots,T_p]}\right)\\
&&-\lambda^{\left|T_1\right|}\sum_{j=3}^{p}{B[x,T_3,\ldots,T_j
  \star_{\lambda}T_1,\ldots,T_p]\star_{\lambda}T_2}+\lambda^{\left|T_1\right|+\left|T_2\right|}\sum_{j=3}^{p}{B[x,t_3,\ldots,(T_j\star_{\lambda}T_1)\star_{\lambda}T_2,\ldots,T_p]}\\
&&+\lambda^{\left|T_1\right|+\left|T_2\right|}\sum_{j=3}^{p}{}\sum_{k=3,k\neq
  j}^{p}{B[x,T_3,\ldots,(T_j \star_{\lambda}T_1),\ldots,(T_k\star_{\lambda}T_2),\ldots,T_p]}.
\end{eqnarray*}
Let then:
\begin{equation}
A_{12}=\big( (\psi_I (B[x,T_3,\ldots,T_p])\psi_I (T_2 ))\psi_I
  (T_1) \big)-\lambda^{\left|T_1\right|}\big(\psi_I
  (B[x,T_3,\ldots,T_p])\big(\psi_I (T_2)\psi_I (T_1)\big)\big).
\end{equation}
Let $A_{21}$ the same term with  $T_1$ and $T_2$ permuted. We have
$A_{12}-A_{21}\in (R)$. It is clear that for any $j\in\{3,\ldots,p\}$,
\begin{eqnarray*}
B_{12}^{j}&=&\lambda^{\left|T_2\right|}\big(
\psi_I(B[x,T_3,\ldots,T_j\star_{\lambda}T_2,\ldots,T_p])\psi_I
(T_1)\big)+\lambda^{\left|T_1\right|}\big(\psi_I
(B[x,T_3,\ldots,T_j\star_{\lambda}T_1,\ldots,T_p])\psi(T_2)\big)\\
&=&B_{21}^{j} 
\end{eqnarray*}
Similarly  $\sum_{k,j=3,k\neq j}^{p}{C_{12}^{jk}}=\sum_{k,j=3,k\neq
  j}^{p}{C_{21}^{jk}}$, where:
\begin{equation}
C_{12}^{jk}=\psi_I (B[x,T_3,\ldots,(T_j\star_{\lambda}T_1),\ldots, (T_K \star_{\lambda}T_2),\ldots,T_p]).
\end{equation}
For any $j\in\{3,\ldots,p\}$ we have:
\begin{equation}
D_{12}^{j}=\lambda^{\left|T_1\right|+\left|T_2\right|}\psi_I
\big(B[x,T_3,\ldots,(T_j\star_{\lambda}T_1)\star_{\lambda}T_2,\ldots,T_p]\big)+\lambda^{\left|T_1\right|}\psi_I \big(B[x,T_3,\ldots,T_j\star_{\lambda}(T_2\star_{\lambda}T_1),\ldots,T_p]\big),
\end{equation}
coincides with $D_{21}^{j}$ by Theorem \ref{theoreme deforme}.
\end{itemize}
\end{itemize}
It was shown that $\psi_I (T)$  does not depend on permutations of the branches
$T_1$ and $T_2$. By induction  we can show that $\psi_I (T)$ does not depend on the order of the branches $T_1$ and $T_j$ for any $j\in\{2,\ldots, p\}$. Similarly we prove that $\psi_I (T)$ does not depend on the order of the branches
$T_j$ and $T_k$ for any $k,j\in\{1,\ldots,p\}$ with $j\neq k$. Hence the uniqueness of $\psi_I (T)$.
\end{proof}

\end{document}